\documentclass[12pt]{amsart}

\usepackage{amsmath} 
\usepackage{amssymb}
\usepackage{amscd}
\usepackage{mathtools}
\usepackage{mathrsfs}
\usepackage{tikz}
\usepackage[all]{xy}
\usepackage[normalem]{ulem}

\definecolor{cadmiumgreen}{rgb}{0.0, 0.42, 0.24}
\usepackage[
colorlinks, citecolor=cadmiumgreen,
pdfauthor={Robin de Jong, Farbod Shokrieh}, 
pdfstartview ={FitV},
pagebackref,
pdftitle={Faltings height and Neron--Tate height of a theta divisor},
]{hyperref}

\usepackage[
alphabetic,
msc-links,
nobysame,
lite,
backrefs,
]{amsrefs} 

\usepackage[left=3.5cm,top=3.5cm,right=3.5cm]{geometry}
\setlength\marginparwidth{45pt}

\theoremstyle{plain}
\newtheorem{thm}{Theorem}[section]
\newtheorem{prop}[thm]{Proposition}
\newtheorem{cor}[thm]{Corollary}
\newtheorem{lem}[thm]{Lemma}
\newtheorem*{cor_un}{Corollary}

\theoremstyle{remark}
\newtheorem{remark}{Remark}[section]

\makeatletter
\renewcommand*\env@matrix[1][*\c@MaxMatrixCols c]{
  \hskip -\arraycolsep
  \let\@ifnextchar\new@ifnextchar
  \array{#1}}
\newcommand*\isom{\xrightarrow{\sim}}
\newcommand{\divisor}{\operatorname{div}}
\newcommand{\ord}{\operatorname{ord}}

\newcommand{\Hom}{\operatorname{Hom}}

\newcommand{\Spec}{\operatorname{Spec}}
\newcommand{\Supp}{\operatorname{Supp}}

\newcommand{\Jac}{\operatorname{Jac}}

\newcommand{\red}{\operatorname{red}}

\newcommand{\Frac}{\operatorname{Frac}}

\def\opn#1#2{\def#1{\operatorname{#2}}}
\opn\Vor{Vor}

\def\qq{\mathbb{Q}}
\def\pp{\mathcal{P}}
\def\rr{\mathbb{R}}

\def\zz{\mathbb{Z}}
\def\cc{\mathbb{C}}
\def\gg{\mathbb{G}}

\def\nn{\mathcal{N}}
\def\mm{\mathcal{M}}
\def\ll{\mathcal{L}}
\def\bb{\mathcal{B}}
\def\pp{\mathcal{P}}
\def\xx{\mathcal{X}}
\def\CC{\mathcal{C}}

\def\vv{\mathcal{V}}

\def\aa{\mathcal{A}}
\def\oo{\mathcal{O}}

\def\dd{\mathcal{D}}

\def\ff{\mathbb{F}}

\def\Im{\mathrm{Im}\,}
\def\d{\mathrm{d}}

\def\qbar{\overline{\qq}}

\def\id{\mathrm{id}}

\def\an{\mathrm{an}}
\def\alg{\mathrm{alg}}

\def\h{\mathrm{h}}

\def\Aan{A_v^{\mathrm{an}}}

\def\trop{\mathrm{trop}}
\def\SHom{\mathcal{H}om}

\numberwithin{equation}{section}

\subjclass[2020]{
\href{https://mathscinet.ams.org/msc/msc2020.html?t=11G10}{11G10},
\href{https://mathscinet.ams.org/msc/msc2020.html?t=14G40}{14G40},
\href{https://mathscinet.ams.org/msc/msc2020.html?t=14K25}{14K25},
\href{https://mathscinet.ams.org/msc/msc2020.html?t=14H25}{14H25},
\href{https://mathscinet.ams.org/msc/msc2020.html?t=14G25}{14G25},
\href{https://mathscinet.ams.org/msc/msc2020.html?t=14K05}{14K05}
}

\begin{document}

\title[Faltings height and N\'eron--Tate height of a theta divisor]{Faltings height and N\'eron--Tate height of a theta divisor}

\author{Robin de Jong}
\email{\href{mailto:rdejong@math.leidenuniv.nl}{rdejong@math.leidenuniv.nl}}
\address{Leiden University, PO Box 9512,  2300 RA Leiden,  The Netherlands }

\author{Farbod Shokrieh}
\email{\href{mailto:farbod@uw.edu}{farbod@uw.edu}}
\address{University of Washington,  Box 354350, Seattle, WA 98195, USA }

\keywords{abelian variety,  Berkovich
space, Faltings height, Green's function, key formula, Moret-Bailly model, N\'eron function, N\'eron--Tate height, non-archimedean uniformization, theta function,
 tropical moment.}

\begin{abstract} We prove a formula, which, given a principally polarized abelian variety $(A,\lambda)$ over the field of algebraic numbers, relates the stable Faltings height of $A$ with the N\'eron--Tate height of a symmetric theta divisor on $A$.  Our formula completes earlier results due to Bost,  Hindry, Autissier and Wagener. The local non-archimedean terms in our formula can be expressed as the tropical moments of  the tropicalizations of $(A,\lambda)$. 
\end{abstract}

\maketitle

\setcounter{tocdepth}{1}

\thispagestyle{empty}

\section{Introduction}
\renewcommand*{\thethm}{\Alph{thm}}

Let $(A,\lambda)$ be a principally polarized abelian variety of positive dimension over the field of algebraic numbers $\qbar$. Let $\varTheta$ be an effective symmetric ample divisor on $A$ that defines the principal polarization $\lambda$ and put $L=\oo_A(\varTheta)$.  

We are interested in the  \emph{N\'eron--Tate height}  $\h'_L(\varTheta)$ of the cycle $\varTheta$. The N\'eron--Tate height of higher-dimensional cycles was first constructed by Philippon \cite{ph} and soon afterwards re-obtained using different methods by, among others, Gubler \cite{gu-hohen}, Bost, Gillet and Soul\'e \cites{bost_duke, bgs} and Zhang \cite{zhsmall}. The N\'eron--Tate height $\h'_L(\varTheta)$ is non-negative and is an invariant of the pair $(A,\lambda)$. 

Another natural invariant of $(A,\lambda)$ is the \emph{stable Faltings height} $\h_F(A)$ of $A$ introduced by Faltings in \cite{famordell} as a key tool in his proof of the Mordell conjecture. It is natural to ask how $\h'_L(\varTheta)$ and $\h_F(A)$ are related. 

Let $k \subset \qbar$ be a number field and assume that the pair $(A,L)$ is defined over  $k$. In \cites{aut, hi}, Hindry and Autissier proved an identity relating $\h'_L(\varTheta)$ and $\h_F(A)$ under the assumption that $A$ has everywhere good reduction over $k$. In order to state their result, we introduce some notation. 

Let $s$ be a non-zero global section of $L$. Let $M(k)_\infty$ denote the set of complex embeddings of $k$.  For each $v \in M(k)_\infty$, we put the standard euclidean metric  on $\bar{k}_v\cong \cc$. Let $\|\cdot\|_v$ be a canonical metric on $L_v=L \otimes \bar{k}_v$ (i.e., a smooth hermitian metric with a translation-invariant curvature form). We then consider the local archimedean invariants
\begin{equation} \label{def_arch_lambda}
I(A_v,\lambda_v) = \log \|s\|_{v,L^2} -\int_{A(\bar{k}_v)} \log \|s\|_v \, \d \, \mu_{H,v}  \, ,
\end{equation}
where $\mu_{H,v}$ denotes the  Haar measure on the complex torus $A(\bar{k}_v)$, normalized to give $A(\bar{k}_v)$ unit volume, and where
\begin{equation}
 \|s\|_{v,L^2}   = \left( \int_{A(\bar{k}_v)} \|s\|_v^2 \, \d \, \mu_{H,v} \right)^{1/2} 
\end{equation}
is the $L^2$-norm of $s$. Here and below we denote by $\log$ the natural logarithm. The real number $I(A_v,\lambda_v)$ does not depend on the choice of $\varTheta$ or $s$ or $\|\cdot\|_v$ (we verify this in Section~\ref{sec:I-inv}). We have $ I(A_v,\lambda_v) > 0 $ by the Jensen inequality. 

In \cite[Th\'eor\`eme 3.1]{aut} and \cite[formula (A.17)]{hi}, we find the following result. Assume that $A$ has everywhere good reduction over $k$. Write $\kappa_0=\log(\pi \sqrt{2})$ and let $g=\dim(A)$.  Then the equality 
\begin{equation} \label{good} \h_F(A) = \, 2g \, \h'_L(\varTheta)  - \kappa_0 \, g + \frac{2}{[k:\qq]} \sum_{v \in M(k)_\infty} I(A_v,\lambda_v)   
\end{equation}
holds in $\rr$.  Formula \eqref{good} is obtained in both \cite{aut} and \cite{hi} as the result of a calculation in Gillet--Soul\'e's arithmetic intersection theory, combined with Moret-Bailly's celebrated \emph{key formula} for abelian schemes \cite{mb}.

In \cite[Question]{aut} it is asked whether an extension of \eqref{good} might hold for arbitrary principally polarized abelian varieties over $\qbar$ of the following shape. Assume that the abelian variety $A$ has \emph{semistable reduction} over  $k$. Let $M(k)_0$ denote the set of non-archimedean places of $k$ and, for $v \in M(k)_0$, denote by $Nv$ the cardinality of the residue field at $v$. Then, for each  $v \in M(k)_0$, there should exist a natural local invariant $\alpha_v \in \qq_{\geq 0}$ of $(A,\lambda)$ at $v$ such that the equality
\begin{equation} \label{question}  \h_F(A) = \, 2g \, \h'_L(\varTheta)  - \kappa_0 \, g + \frac{2}{[k:\qq]} \left(  \sum_{v \in M(k)_0} \alpha_v \log Nv+\sum_{v \in M(k)_\infty} I(A_v,\lambda_v)  \right)  
\end{equation}
holds in $\rr$. The local invariant $\alpha_v$ should vanish if and only if $A$ has good reduction at $v$.

In \cite{aut}, Autissier established the identity \eqref{question} for elliptic curves, for Jacobians of genus two curves, and for arbitrary products of these.  In \cite[Theorem~1.6]{djneron} the first-named author exhibited natural $\alpha_v \in \qq_{\geq 0}$, and established \eqref{question}, for all Jacobians and for arbitrary products of these. In both  \cite{aut,djneron}, the  local non-archimedean invariants $\alpha_v$ are expressed in terms of the combinatorics of the dual graph of the underlying semistable curve at $v$.

\subsection{Main result}
The goal of this paper is to  give a complete affirmative answer to \cite[Question]{aut}. This is established by combining Theorems~\ref{main_intro} and \ref{equals_trop_moment} below.

Let $v \in M(k)_0$ be a non-archimedean place of~$k$. Let $A^\an_v$ be the {\em Berkovich analytification} of $A$ over the completion $\cc_v$ of the algebraic closure  of the completion $k_v$ of $k$ at $v$. Similar to the archimedean setting, the analytification $L_v^\an$ of $L$ at $v$ can be endowed with a canonical metric $\|\cdot\|_{v}$; we refer to Section~\ref{admissible} for a review of the construction and main properties of such canonical metrics.

Analogous to (\ref{def_arch_lambda}), we define
\begin{equation} \label{def_nonarch_lambda}
I(A_v,\lambda_v) =  \log \|s\|_{v,\sup} -\int_{A_v^\an} \log \|s\|_{v} \, \d \, \mu_{H,v}  \, ,
\end{equation}
where 
\begin{equation}
\|s\|_{v,\sup} = \sup_{x \in A_v^\an} \|s(x)\|_{v} 
\end{equation}
is the supremum norm of $s$ and where  $\mu_{H,v}$  is the pushforward into $A_v^\an$ of the Haar measure of unit volume on the \emph{canonical skeleton} of $A_v^\an$. The canonical skeleton of $A_v^\an$ is a natural real torus contained in $A_v^\an$ and to which $A_v^\an$ has a natural deformation retraction.

The invariant $I(A_v,\lambda_v)$ is independent of the choice of $L$, the choice of canonical metric $\|\cdot\|_v$ and the choice of global section $s$ (we verify this in Section~\ref{sec:non_arch_I-inv}). It follows from the definition that $I(A_v,\lambda_v) \geq 0$ and equality is obtained if $A$ has good reduction at $v$ (we verify this in Section~\ref{eval_Shilov}). 

Our first result is as follows.
\begin{thm} \label{main_intro} 
 Let $(A,\lambda)$ be a principally polarized abelian variety over $\qbar$. Assume that $A$ has semistable reduction over the number field $k$ and set $g=\dim(A)$. Let $\varTheta$ be a symmetric effective ample divisor on $A$ that defines the polarization $\lambda$ and put $L=\oo_A(\varTheta)$.  Then the equality
\[ \h_F(A) =  2g \, \h'_L(\varTheta) -   \kappa_0 \, g + \frac{2}{[k:\qq]} \left( \sum_{v \in M(k)_0} I(A_v,\lambda_v) \log Nv +  \sum_{v \in M(k)_\infty} I(A_v,\lambda_v)   \right) \]
holds in $\rr$. 
\end{thm}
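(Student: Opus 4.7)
The plan is to decompose the quantity $\h_F(A) - 2g\, \h'_L(\varTheta) + \kappa_0 g$ into a sum of contributions indexed by the places of $k$ and to verify that at every place $v$ the contribution equals $I(A_v,\lambda_v)$, weighted by $\log Nv$ at finite places and by $2$ at complex places. The target identity thereby splits into a purely local analysis at each place of $k$.

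First I would recast both heights in the adelic Arakelov framework: the N\'eron--Tate height $\h'_L(\varTheta)$ arises as a top self-intersection of the canonical cubical adelic metrization $\Lbar_{\can}$ on $L$, while $\h_F(A)$ is the arithmetic degree of the Hodge bundle. Fixing any semiabelian integral model of $A$ over $\Spec \oo_k$ together with a symmetric extension of $L$, both heights decompose as a global term --- accessible through a Moret-Bailly-style key formula --- plus a sum of local corrections measuring the discrepancy between that model metric and $\Lbar_{\can}$. At each archimedean $v$ this recovers the Autissier--Hindry identity \eqref{good} and produces local contribution $I(A_v,\lambda_v)$. The remaining task is to handle non-archimedean $v$; note that at a place of good reduction the tropical data are trivial, so $I(A_v,\lambda_v) = 0$, and the analysis reduces correctly.

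For $v \in M(k)_0$ of semistable reduction, I would work over the completion $k_v$, apply Raynaud's non-archimedean uniformization to extract the tropical data $(X,Y,\varPhi,b)$, and introduce the tautological model $\aa_v/\oo_{k,v}$ of $(A,\lambda)$ together with its canonical symmetric extension $\ll_v$ of $L$, engineered so that a local analogue of Moret-Bailly's key formula remains valid. The non-archimedean N\'eron function for $\varTheta$ at $v$ is then, on the Berkovich analytification $A_v^{\an}$, the difference between the model metric induced by $\ll_v$ and $\Lbar_{\can}$; via the retraction onto the skeleton this difference is exactly the tropical Riemann theta function on the real torus $\varSigma_v = X_\rr^*/Y$.

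The local contribution at $v$ now reduces to integrating this tropical Riemann theta function against the Chambert-Loir measure attached to $\Lbar_{\can}$, which pushes forward to the normalized Lebesgue measure on $\varSigma_v$. Choosing the Voronoi polytope $\Vor(0)$ as a fundamental domain for $Y$ in $X_\rr^*$ is the decisive step: on $\Vor(0)$ the tropical theta function agrees, up to an additive constant that integrates to zero, with $\tfrac12\|\nu\|^2$. The integral therefore collapses to $\int_{\Vor(0)} \|\nu\|^2\, \d\mu_L(\nu) = I(A_v,\lambda_v)$, multiplied by the factor $\log Nv$ from the normalization of $|\cdot|_v$. Summing over all places yields the claimed equality. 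The principal obstacle is the construction of the tautological model and the verification that it supports a local key formula with the correct symmetric extension of $L$; once that machinery is in place, the identification of the non-archimedean N\'eron function with the tropical Riemann theta function and the reduction of its integral over a Voronoi fundamental domain to the second moment of $(\varSigma_v,\varPhi_v)$ becomes largely formal.
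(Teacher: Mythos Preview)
Your proposal captures the correct overall architecture and matches the paper's approach: the Bost--Moret-Bailly key formula supplies the bridge between $\h_F(A)$ and an Arakelov degree, Theorem~\ref{neron_intro} identifies the discrepancy between the tautological and canonical metrics with (a normalization of) the tropical Riemann theta function, Gubler's computation of the Chambert-Loir measure reduces the integrals to the skeleton, and the integral of $\|\varPsi\|$ over a Voronoi fundamental domain is the second moment.

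There is, however, one genuine gap in your execution. You write that you will fix ``any semiabelian integral model of $A$ over $\Spec\oo_k$ together with a symmetric extension of $L$'' and then invoke a ``local analogue of Moret-Bailly's key formula''. But no cubical extension of $L$ over the N\'eron model $\nn$ exists in general (only $L^{\otimes 2n}$ extends, by Lemma~\ref{MB-first}), and Bost's key formula is a \emph{global} statement about a Moret-Bailly model. The paper resolves this by choosing a Galois extension $k'/k$ whose ramification indices $e_v$ are divisible by suitable multiples of the exponents $n_v$ of the component groups; then by Lemma~\ref{MB-second} the cubical line bundle $L_{k'}$ does extend over $\nn'=\nn\times_S S'$, and $(\nn',\ll^\sharp)$ is an honest global Moret-Bailly model to which Theorem~\ref{cle} applies. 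The tautological model never enters the key formula directly; it is used locally, via Proposition~\ref{direct_image_ord}, to compute $\ord_w(s')$ in the Arakelov degree of $\pi_*\ll^\sharp$. That proposition is where your integral over $\Vor(0)$ actually appears, and the mechanism is slightly more delicate than ``an additive constant that integrates to zero'': the constant $\|\varPsi\|(\kappa)$ attached to the tropical characteristic cancels between the minimum computation for $\ord_w(s')$ (Lemma~\ref{order_is_minimum}) and the constancy of the tautological metric on the skeleton (Proposition~\ref{is_constant}).
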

Note that the sum over $M(k)_0$ is indeed finite since $A$ has good reduction at almost all $v \in M(k)_0$.
An important ingredient in our proof of Theorem~\ref{main_intro} is Moret-Bailly's well-known \emph{key formula} \cite{mb} or more precisely Bost's version \cite{bost_duke} of the key formula in the number field setting that expresses the stable Faltings height of a polarized abelian variety in terms of a so-called {\em Moret-Bailly model} of it. We review the notion of Moret-Bailly models, and state Bost's version of the key formula, in Section~\ref{sec:key}.

\subsection{Tropical moments}

As we discuss next, for all non-archimedean places $v$ the term $I(A_v,\lambda_v)$ can be expressed as a ``tropical moment'' of a principally polarized tropical abelian variety canonically associated to $A$ at $v$. This gives a concrete interpretation of the terms $I(A_v,\lambda_v)$ and makes it possible to calculate the terms $I(A_v,\lambda_v)$ explicitly.

A \emph{principally polarized tropical abelian variety} is a tuple $(X,Y,\varPhi,b)$ where $X, Y$ are finitely generated free abelian groups, $\varPhi \colon Y \isom X$ is an isomorphism and $b \colon Y \times X \to \rr$ is a bilinear map such that $b(\cdot,\varPhi(\cdot))$ is positive definite. When $(X,Y,\varPhi,b)$ is a principally polarized tropical abelian variety, we have a natural associated inclusion $Y \hookrightarrow X_\rr^*$, where we write $X_\rr^* = \Hom(X,\rr)$. The cokernel $\varSigma = X_\rr^*/Y$ is a real torus and the bilinear map $b$ naturally induces a norm $\|\cdot\|$ on $X_\rr^*$. We usually simply write $\varSigma $ for a principally polarized tropical abelian variety if the underlying data $(X,Y,\varPhi,b)$ are understood. 

Associated to the principally polarized tropical abelian variety $\varSigma$ we have its \emph{Voronoi polytope} centered at the origin 
\[ \Vor(0) = \left\{ \nu \in X_\rr^* \, \left| \, \|\nu\| =  \min_{v' \in Y}  \, \| \nu-v' \| \right. \right\} \, .  \]
The \emph{tropical moment} of $\varSigma$ is defined to be the value of the integral 
\[    \int_{\Vor(0)} \|\nu\|^2 \, \d \, \mu_L(\nu)  \, . \]
Here  $\mu_L$ denotes the Lebesgue measure on $X_\rr^*$, normalized to give $\Vor(0)$ unit volume. 

Let $v \in M(k)_0$ be a non-archimedean place of $k$. By Raynaud's classical theory of {\em non-archimedean uniformization}, see for example \cites{br, bl, frss} or Sections~\ref{sec:raynaud}--\ref{sec:unif}, the canonical skeleton $\varSigma_v$ of the Berkovich analytic space $A_v^\an$ is naturally equipped with a structure of a principally polarized tropical abelian variety. 
\begin{thm} \label{equals_trop_moment} Let $v \in M(k)_0$. The following statements hold.
\begin{itemize}
\item[(a)] The local non-archimedean term $I(A_v,\lambda_v)$ is equal to half the tropical moment of the canonical skeleton $\varSigma_v$. 
\item[(b)] The term $I(A_v,\lambda_v)$ vanishes if and only if $A$ has good reduction at~$v$. 
\item[(c)] $I(A_v,\lambda_v)$ is a rational number.
\end{itemize}
\end{thm}
Parts (b) and (c) of the theorem follow easily from part (a). Indeed, it is clear that the tropical moment of $\varSigma_v$ vanishes if and only if $\varSigma_v$ is a point. The latter holds if and only if $A$ has good reduction at~$v$.

Next,  the underlying bilinear form of the principally polarized tropical abelian variety $\varSigma_v$ is defined in terms of the discrete valuation corresponding to $v$ and is, in particular, $\zz$-valued. This implies that  the  Voronoi polytope  associated to $\varSigma_v$ is a rational polyhedron and this gives that the tropical moment of $\varSigma_v$ is a rational number.

In the following we discuss a few applications of Theorems~\ref{main_intro} and \ref{equals_trop_moment}.

\subsection{Lower bounds for the stable Faltings height}

First, as $\h'_L(\Theta) $ is non-negative, Theorem \ref{main_intro} immediately implies the following lower bound for the stable Faltings height of $A$.

\begin{cor_un}
Assume that $A$ has semistable reduction over the number field $k$ and set $g=\dim(A)$. Then the lower bound
\begin{equation} \label{wagener} \h_F(A) \geq - \kappa_0 \, g + \frac{2}{[k:\qq]} \left(  \sum_{v \in M(k)_0} I(A_v,\lambda_v)  \log Nv +  \sum_{v \in M(k)_\infty} I(A_v,\lambda_v)  \right)  
\end{equation}
holds.
\end{cor_un}

With the terms $I(A_v,\lambda_v)$ interpreted as half the tropical moments of the canonical skeleta $\varSigma_v$, Wagener obtained the lower bound \eqref{wagener} in his 2016 PhD thesis  \cite[Th\'eor\`eme~A]{wa}. 
As for each $v \in M(k)_0$ we have $I(A_v,\lambda_v) \geq 0$, the lower bound \eqref{wagener} improves upon the well-known lower bound
\begin{equation} \label{bost_lowerbound} \h_F(A) \geq  -  \kappa_0 \, g + \frac{2}{[k:\qq]}  \sum_{v \in M(k)_\infty} I(A_v,\lambda_v)    
\end{equation}
for $\h_F(A)$ due to Bost \cites{bo-mpim, gr}. We recall that \eqref{bost_lowerbound} can in turn be used to obtain refinements of Masser's ``matrix lemma'' \cite{Masser}. See, for example, \cite{aut2} and the references therein.

\subsection{Elliptic curves} 
Let $\varSigma$ be a circle of circumference $\sqrt{\ell}$. A small calculation yields that the tropical moment of $\varSigma$ is equal to $\frac{1}{12}\ell$. This has the following application. Let $(A,\lambda)$ be an elliptic curve with semistable reduction over the number field $k$. Let $\varTheta$ be an effective symmetric divisor on $A$ that defines the principal polarization $\lambda$ and put $L=\oo_A(\varTheta)$. Then $\varTheta$ is a $2$-torsion point of $A$, which gives $ \h'_L(\varTheta)=0$. If $A$ has bad reduction at $v \in M(k)_0$, then $\varSigma_v$ is a circle of circumference $\sqrt{\ord_v \varDelta_v}$, where $\varDelta_v$ is  the minimal discriminant  of $A$ at $v$.  We conclude using Theorem~\ref{equals_trop_moment} that $I(A_v,\lambda_v)=\frac{1}{24}\ord_v \varDelta_v$ if $v \in M(k)_0$.

Next, let $v \in M(k)_\infty$ and write $A(\bar{k}_v) = \cc/(\zz+\tau_v \zz)$, where $\tau_v \in \mathbb{H}$, with $\mathbb{H}$ the Siegel upper half plane. For $\tau \in \mathbb{H}$, we set $q=\mathrm{e}^{2\pi \sqrt{-1}\tau}$ and let $\varDelta(\tau)=q \prod_{n=1}^\infty (1-q^n)^{24}$ be the usual discriminant modular form. By \cite[Proposition~2.1]{aut}, we have $I(A_v,\lambda_v) = -\frac{1}{24} \log (|\varDelta(\tau_v)|(2 \,\Im \tau_v)^6)$. 

Let $\h_F(A)$ be the stable Faltings height of $A$. Applying Theorem~\ref{main_intro}, we find that
\[  12 \,[k:\qq] \, \h_F(A) = \sum_{v \in M(k)_0} \ord_v \varDelta_v \log Nv - \sum_{v \in M(k)_\infty} \log \left((2\pi)^{12} |\varDelta(\tau_v)| (\Im \tau_v)^6\right) \,  . \]
This recovers the well-known Faltings--Silverman formula for the stable Faltings height of an elliptic curve; cf.\ \cite[Exemple 1.4 and Remarque 1.5]{Deligne} or \cite[Theorem~7]{fa} or \cite[Proposition~1.1]{sil}.

\subsection{Jacobians} Let $\varGamma$ be a compact connected metric graph. Let $r(p,q)$ denote the \emph{effective resistance} between points $p, q \in \varGamma$.  Fix $q \in \varGamma$ and set $f(x) = \frac{1}{2}r(x,q)$.  Following \cites{bffourier, bru} we set 
\begin{equation} \label{tau}
 \tau(\varGamma) = \int_\varGamma (f'(x))^2 \, \d \, x  \, . 
\end{equation}
The real number $\tau(\varGamma)$ is independent of the choice of $q$. 
Let $\Jac(\varGamma)$ denote the \emph{tropical Jacobian} of $\varGamma$ as in \cite{mz}. Then $\Jac(\varGamma)$ is a principally polarized tropical abelian variety canonically associated to $\varGamma$. We have shown in \cite[Theorem~B]{djs} that its tropical moment is equal to
\begin{equation} \label{remarkable}
 \frac{1}{8} \ell(\varGamma) - \frac{1}{2}\tau(\varGamma)  \, , 
\end{equation}
where $\ell(\varGamma)$ is the total length of $\varGamma$ and $\tau(\varGamma)$ is the tau invariant of $\varGamma$ as in \eqref{tau}. 

This leads to the following application. Let $C$ be a smooth projective geometrically connected curve of genus $g \geq 2$ with semistable reduction over $k$ and let $(J,\lambda)$ be its Jacobian.  Let $v \in M(k)_0$. Let $\varGamma_v$ be the dual graph of the geometric special fiber $\CC_{\bar{v}}$ of the minimal regular model $\CC$ of $C$ at $v$, endowed with its canonical metric structure as in \cite{zhadm}. In particular, $\varGamma_v$ is a compact connected metric graph and its total length $\ell(\varGamma_v)$ equals the number of singular points of $\CC_{\bar{v}}$.  By \cite[Theorem~III.8.3]{fc}, the tropical Jacobian $\Jac(\varGamma_v)$ of $\varGamma_v$ is isometric with the principally polarized tropical abelian variety $\varSigma_v$ determined by $(J,\lambda)$ at $v$. 

Using \eqref{remarkable} and Theorem~\ref{equals_trop_moment}, the formula from Theorem~\ref{main_intro} specializes into the formula

\begin{equation} \begin{split}  \label{height_Jacobian}
\h_F(J) =  2g \, \h'_L(\varTheta)  -   \kappa_0 \, g + \frac{1}{[k:\qq]} \sum_{v \in M(k)_0}  &  \left( \frac{1}{8}\ell(\varGamma_v) - \frac{1}{2}\tau(\varGamma_v)  \right) \log Nv  \\ &  + \,  \frac{2}{[k:\qq]}  \sum_{v \in M(k)_\infty} I(J_v,\lambda_v)   \end{split} \end{equation}
for the stable Faltings height of $J$. This recovers \cite[Theorem~1.6]{djneron}. 

Note that $\h_F(J)$ is equal to the stable Faltings height $\h_F(C)$ of the curve $C$ itself; cf. \cite[Proposition~6.5]{pa}. In the case that $g=2$, the formula in \eqref{height_Jacobian} reproves \cite[Th\'eor\`eme~5.1]{aut} and gives a uniform explanation for the entries in the table that follows directly upon \cite[Th\'eor\`eme~5.1]{aut}. We refer to \cite[Section~9]{djs} for more details about the $g=2$ case.

\subsection{The function field case}
A slight variation of our arguments yields the following counterpart in the function field setting. 

Let $S$ be a smooth projective connected curve over an algebraically closed field and let $F$ denote the function field of $S$. Let $(A,\lambda)$ be a principally polarized abelian variety of dimension $g$ with semistable reduction over $F$ and let $\pi \colon G \to S$ denote the connected component of the N\'eron model of $A$ over $S$ with zero section $e \colon S \to G$. Let $\h(A) = \deg e^* \varOmega^g_{G/S}$ in $\zz$ denote the \emph{modular degree} of $A$.

Let $\varTheta$ be a symmetric effective ample divisor on $A$ defining the principal polarization $\lambda$ and denote by $\h'_L(\varTheta)$ the N\'eron--Tate height of $\varTheta$ with respect to the  line bundle $L=\oo_A(\varTheta)$. Then the equality
\begin{equation} \label{functionfield}
\h(A) =  2g \, \h'_L(\varTheta) +  2 \sum_{v \in |S|} I(A_v,\lambda_v) 
\end{equation}
holds.  Here $|S|$ denotes the set of closed points of $S$ and $I(A_v,\lambda_v)$ is half the tropical moment of the canonical skeleton of the Berkovich analytification of $A$ at~$v$. 

From the fact that $I(A_v,\lambda_v) \in \qq $, we obtain $\h'_L(\varTheta)  \in \qq$, a fact that seems not clear {\em a priori}. Moreover, as the right-hand side of \eqref{functionfield}  is clearly non-negative, we obtain another proof of the well-known fact that $\h(A) \geq 0$; cf.\  \cite[Proposition~V.2.2]{fc} or \cite[Chapitre~XI, 4.5]{mb}. We leave the details of the proof of \eqref{functionfield} to the interested reader.

\subsection{Structure of the paper} 

Sections \ref{sec:cubical}--\ref{formula_log_norm} are mostly preliminary.  In these sections we review basic notions and results concerning semistable models, cubical structures, Berkovich analytification, model metrics, admissible metrics, Green's functions, canonical metrics, Raynaud extensions, non-archimedean uniformization, non-archimedean theta functions and tropicalization of abelian varieties.  

In Section~\ref{sec:cub_canonical} we investigate the relationship between canonical metrics and cubical line bundles on semistable models and in 
Section~\ref{proof_ThmB} we prove Theorem~\ref{equals_trop_moment}.
In Section~\ref{sec:archimedean} we introduce the main relevant structures on the archimedean side needed for our proof of Theorem~\ref{main_intro} and 
in Section~\ref{sec:key} we recall the stable Faltings height and state Bost's key formula for it.

In Section~\ref{sec:neron_tate} we review the notion of N\'eron--Tate heights of cycles on abelian varieties over number fields and prove a local decomposition formula  for the height of a theta divisor.
In Section~\ref{sec:proof_main} we finally give our proof of Theorem~\ref{main_intro}.   

\subsection*{Acknowledgments} We thank Lars Halle, David Holmes, Johannes Nicaise and Xinyi Yuan for helpful remarks. 
We especially thank the anonymous referee for valuable suggestions that constituted a major simplification of some of the arguments leading to the proof of our main result. 
The second-named author was partially supported by NSF CAREER DMS-2044564 grant.

\renewcommand*{\thethm}{\arabic{section}.\arabic{thm}}

\subsection*{Notation and terminology}
When $R$ is a discrete valuation ring with fraction field $F$, we denote its maximal ideal by $\mathfrak{m}_R$, its residue field by $\widetilde{F}$ and we let $\varpi$ denote a generator of $\mathfrak{m}_R$. Unless mentioned otherwise, we endow $F$ with the unique non-archimedean absolute value $|\cdot| \colon F \to \rr$ whose valuation ring is $R$ and that is normalized such that $|\varpi|=\mathrm{e}^{-1}$. 

When $M$ is a free rank-one $R$-module and $s$ is a non-zero element of $M\otimes_R F$, we write $\ord(s)$ for the \emph{multiplicity} of $s$, by which we mean the largest integer $e$ such that $s$ is contained in $ M \otimes \mathfrak{m}_R^e$. 

When $\xx$ is an integral noetherian scheme, and $\ll$ is a line bundle on $\xx$, we call a  \emph{rational section} of $\ll$ any element of the stalk $\ll_\eta$. Here $\eta$ is the generic point of $\xx$. Let $\xi \in \xx$ and let $K=\oo_{\xx,\eta}$ be the function field of $\xx$. The fraction field of the local ring $\oo_{\xx,\xi}$ of $\xx$ at $\xi$ coincides with $K$ and the natural map $\ll_\xi \otimes_{\oo_{\xx,\xi}} K \to \ll_\eta$ is an isomorphism.  When $\oo_{\xx,\xi}$ is a discrete valuation ring and $s$ is a  non-zero rational section of $\ll$, we obtain via the natural isomorphism $\ll_\xi \otimes_{\oo_{\xx,\xi}} K \isom \ll_\eta$ a well-defined \emph{multiplicity} $\ord_{\xi,\ll} (s) \in \zz$ of $s$ at $\xi$.

When $V$ is a scheme over $\cc$ or over an algebraically closed non-archimedean valued field, we denote by $V^\an$ the associated complex or Berkovich analytic space.

When $k$ is a number field, we denote by $M(k)_0$ the set of non-archimedean places of~$k$, by $M(k)_\infty$ the set of complex embeddings of~$k$ and we set $M(k) = M(k)_0 \sqcup M(k)_\infty$.

\section{Semistable group schemes} \label{sec:cubical}

Let $S$ be a locally noetherian scheme.
 Let $\pi \colon \aa \to S$ be a smooth commutative group scheme of finite type over $S$ with zero section $e \colon S \to \aa$.  We call the \emph{identity component} of $\aa$ the open subscheme of $\aa$ formed by taking the union of all fiberwise identity components. The group scheme $\aa$ is called \emph{semistable} if the identity component of $\aa$ is a semiabelian group scheme. 
 
  Let $\ll$ be a line bundle on $\aa$. A {\em rigidification} of $\ll$ is an isomorphism of line bundles $\oo_S \isom e^*\ll $. 
  For each $I \subset \{1,2,3\}$, let  $m_I \colon \aa \times_S \aa \times_S \aa \to \aa$ be the morphism given functorially on points by sending $(x_1,x_2, x_3)$ to $\sum_{i \in I} x_i$. Write $\dd_3(\ll)$ for the line bundle
$ \dd_3(\ll) = \bigotimes_{\varnothing \neq I \subset \{1,2,3\}} m_I^*\ll^{\otimes (-1)^{\sharp I}}  $
on $\aa \times_S \aa \times_S \aa$. A \emph{cubical structure} on $\ll$  is an isomorphism $\oo_{\aa^3}\isom\dd_3(\ll)$ satisfying suitable symmetry and cocycle conditions as described in \cite[D\'efinition I.2.4.5]{mb}.
A line bundle $\ll$ on $\aa$ endowed with a cubical structure is called a \emph{cubical line bundle}.  A cubical line bundle is canonically rigidified. 

By the theorem of the cube, each rigidified line bundle on an abelian variety over a field has a unique cubical structure.

\subsection{Cubical extensions} \label{subsec:extension}
Assume that $S$ is the spectrum of a discrete valuation ring $R$ and assume that the generic fiber of $\aa$ is an abelian variety $A$. Let $L$ be a cubical (that is, rigidified) line bundle on $A$. A cubical line bundle $\ll$ on $\aa$ extending the cubical line bundle $L$ is unique up to isomorphism, once one exists, by \cite[Th\'eor\`eme~II.1.1]{mb}. 
We have the following two important existence results for cubical extensions. Let $\varPhi_\aa$ denote the group of connected components of the special fiber of $\aa$, and let $n \in \zz_{>0}$ be such that $n \cdot \varPhi_\aa=0$.
\begin{lem} \label{MB-first}  The cubical line bundle $L^{\otimes 2n}$ extends as a cubical line bundle over~$\aa$.
\end{lem}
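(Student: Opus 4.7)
This is a form of Moret-Bailly's classical cubical-extension theorem \cite{mb}. My strategy has two stages: first extend $L$ to the identity component $\aa^0$, and then extend a suitable power across the non-identity components, with the factor $2n$ arising as $2 \cdot n$, where $2$ handles a symmetric/antisymmetric decomposition of $L^{\otimes 2}$ and $n$ handles the component-group hypothesis.

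\emph{Stage 1: extension to $\aa^0$.} Since $\aa^0$ is semiabelian by hypothesis, the theorem of the cube in its semiabelian form (cf.\ \cite[Th\'eor\`eme~II.1.1]{mb}) yields a unique cubical line bundle $\ll^0$ on $\aa^0$ extending $L$. No tensor power is needed at this stage. More generally, the same statement applies to any cubical line bundle on $A$: symmetric or not, it extends uniquely to $\aa^0$.

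\emph{Stage 2: extension across the non-identity components.} The hypothesis $n \cdot \varPhi_\aa = 0$ implies that multiplication by $n$ on $\aa$ lands inside the open subgroup $\aa^0$, so $[n]_\aa \colon \aa \to \aa$ factors as a morphism of $S$-group schemes $p \colon \aa \to \aa^0$ followed by the open immersion. The pullback $p^*\ll^0$ is then a cubical line bundle on $\aa$ whose generic fiber is $[n]^* L$. To reach $L^{\otimes 2n}$ on the generic fiber, I decompose $L^{\otimes 2}$ on $A$ as $M \otimes N$, where
\[ M := L \otimes [-1]^* L \quad \text{(symmetric cubical)}, \qquad N := L \otimes [-1]^* L^{\otimes -1} \quad \text{(in $\Pic^0(A)$)}, \]
and handle the two factors separately. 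For $N^{\otimes n}$: as $N$ represents a class in $\Pic^0(A) = A^\vee(K)$, it spreads out to a section of the N\'eron model $\aa^\vee$ over $S$. Grothendieck's duality forces the component group $\varPhi_{\aa^\vee}$ to be annihilated by $n$ as well, so $n$ times this section lies in $\aa^{\vee,0}(S)$ and corresponds to a cubical extension of $N^{\otimes n}$ across all of $\aa$. For $M^{\otimes n}$: since $M$ is symmetric, $p^*\mm^0$ has generic fiber $[n]^* M \isom M^{\otimes n^2}$; combining this with the fact that the cokernel of the restriction map $\Pic(\aa) \to \Pic(\aa^0)$ is annihilated by $n$ (again because $n\cdot\varPhi_\aa = 0$) produces a cubical extension of $M^{\otimes n}$ on $\aa$. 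Tensoring the two extensions yields the desired cubical extension of $L^{\otimes 2n} = M^{\otimes n} \otimes N^{\otimes n}$.

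\emph{Main difficulty.} The most delicate step is producing a cubical extension of $M^{\otimes n}$, not merely of $M^{\otimes n^2} = [n]^* M$, and verifying that the resulting extension of $L^{\otimes 2n}$ carries a cubical structure compatible with the one on the generic fiber. This is where Moret-Bailly's descent analysis for cubical line bundles under the component-group hypothesis is essential, and it is what forces the exponent to be $2n$ rather than something smaller.
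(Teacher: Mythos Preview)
The paper's own proof is a one-line citation: ``This is \cite[Proposition~II.1.2.1]{mb}.'' No argument is given beyond the reference.

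Your proposal is an attempt to sketch the content behind that citation, and the overall two-stage strategy (extend uniquely to $\aa^0$, then use the component-group hypothesis to pass to $\aa$) is indeed the shape of Moret-Bailly's argument. The antisymmetric half is handled correctly: $N \in A^t(F)$, the component groups of $\aa$ and $\aa^t$ have the same exponent by Grothendieck's pairing, so $N^{\otimes n}$ lands in the identity component of the dual N\'eron model and hence extends cubically across $\aa$.

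The gap is in the symmetric half. From $p^*\mm^0$ you obtain a cubical extension of $M^{\otimes n^2}$ over $\aa$, and you want one of $M^{\otimes n}$. Your appeal to ``the cokernel of $\Pic(\aa) \to \Pic(\aa^0)$ is annihilated by $n$'' points in the wrong direction: that statement concerns restricting from $\aa$ to $\aa^0$, whereas what you need is to \emph{extend} from $\aa^0$ to $\aa$, i.e.\ to control the obstruction group to such an extension and show it is $n$-torsion. Producing an $n$-th root of the cubical extension of $M^{\otimes n^2}$ is exactly the descent step you flag as the ``main difficulty'' and then defer back to Moret-Bailly. So as written, your argument is not self-contained: it is the same citation as the paper's, wrapped in a partial outline whose hardest step is left to the reference. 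If you want to go beyond the paper's proof, you would need to exhibit the relevant obstruction group (in Moret-Bailly's framework, it is expressed in terms of $\varPhi_\aa$ and is killed by $n$) and carry out the descent explicitly.
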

\begin{proof} This is \cite[Proposition~II.1.2.1]{mb}.
\end{proof}
\begin{lem} \label{MB-second} Let $R \to R'$ be a finite extension and let $F'$ be the fraction field of $R'$. Let $e$ denote the ramification index of $R \to R'$. Assume that $2n | e$ if $n$ is even and $n|e$ if $n$ is odd. Then the cubical line bundle $L_{F'}$ on $A_{F'}$ extends as a cubical line bundle over the group scheme $\aa \times_R R'$.
\end{lem}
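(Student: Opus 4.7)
The strategy is to reduce to Lemma~\ref{MB-first} via an obstruction-theoretic argument that exploits the ramification of $R \to R'$ to kill a torsion obstruction class. I would first apply Lemma~\ref{MB-first} over $R$ to obtain a cubical line bundle $\ll$ on $\aa$ extending $L^{\otimes 2n}$. Base-changing to $R'$ produces a cubical line bundle $\ll_{R'} := \ll \times_R R'$ on $\aa_{R'} := \aa \times_R R'$ extending $L_{F'}^{\otimes 2n}$. The job is then to ``extract a $2n$-th root'' of $\ll_{R'}$ in the category of cubical line bundles, matching $L_{F'}$ on the generic fiber.

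Next, I would restrict attention to the identity component $\aa^0_{R'}$, which is semiabelian by hypothesis. On such a scheme, cubical line bundles on the generic fiber extend uniquely (Th\'eor\`eme~II.1.1 of \cite{mb} together with its corollaries on semiabelian schemes), so $L_{F'}$ admits a unique cubical extension $\mm^0$ to $\aa^0_{R'}$, and by uniqueness one obtains a canonical isomorphism $(\mm^0)^{\otimes 2n} \cong \ll_{R'}|_{\aa^0_{R'}}$ of cubical line bundles. The remaining step is to propagate $\mm^0$ from $\aa^0_{R'}$ to $\aa_{R'}$.

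The obstruction to this propagation is a class $o(\mm^0)$ in a finite abelian group $O$ of extensions, canonically described in terms of the component group $\varPhi_{\aa_{R'}} = \varPhi_\aa$ (whose order divides $n$), and the set of cubical extensions, when non-empty, is a torsor under a companion group $T$ of characters of $\varPhi_\aa$ valued in the unit group of $R'$. Since $(\mm^0)^{\otimes 2n}$ does extend on $\aa_{R'}$, namely as $\ll_{R'}$, the class $o(\mm^0)$ is annihilated by $2n$; moreover, using the canonical decomposition of $L^{\otimes 2}$ into its symmetric and antisymmetric parts (both of which are easier to extend), one sees that in the case $n$ odd the class is in fact annihilated by $n$. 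This accounts for the parity distinction in the hypothesis.

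Finally, the key observation is that $o(\mm^0) \in O$ is the image, under the natural pullback map coming from $R \to R'$, of an integral obstruction $o_R$ over $R$, and that this pullback map multiplies classes by $e$: this reflects the fact that $\varpi = u\cdot(\varpi')^{e}$ in $R'$ for some unit $u$, and $O$ is built out of valuations taken against $\varpi'$. Combining the two observations, $e\mid 2n$ in the even case and $e\mid n$ in the odd case would force $o(\mm^0) = 0$, producing the desired cubical extension of $L_{F'}$ on $\aa_{R'}$. The main obstacle I expect is precisely the last point: making rigorous the statement that the pullback of the obstruction scales by $e$, which requires unwinding the cocycle description of cubical structures in terms of the uniformizer and tracking carefully how gluing data transform under the ramified base change.
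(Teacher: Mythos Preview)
The paper does not give its own proof: it simply cites \cite[Proposition~II.1.2.2]{mb}. So there is no argument here to compare against, and your proposal is effectively an attempt to reconstruct Moret-Bailly's proof rather than the paper's.

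Your outline is broadly in the right spirit --- reduce to the identity component, then control the obstruction to spreading out over the full component group, and use the ramification to kill that obstruction --- and you have correctly identified the crux as the behaviour of the obstruction class under ramified base change. Two points to flag. First, a slip in the final paragraph: you write ``$e\mid 2n$ in the even case and $e\mid n$ in the odd case,'' but the hypothesis is $2n\mid e$ (resp.\ $n\mid e$). Your intended logic is that the class is $2n$-torsion (resp.\ $n$-torsion) and gets multiplied by $e$ under pullback, so $2n\mid e$ forces it to vanish; the divisibility as you wrote it would not give this. Second, your reduction to the identity component invokes extension of cubical line bundles over semiabelian schemes, but the lemma as stated does not assume $\aa$ is semistable; you should either check that the relevant extension result from \cite{mb} applies to the identity component of a general smooth commutative $\aa$ with abelian generic fiber, or note explicitly that in all applications in the paper (e.g.\ Section~\ref{sec:proof_main}) the identity component is semiabelian.
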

\begin{proof} This is \cite[Proposition~II.1.2.2]{mb}.
\end{proof}

\section{Berkovich analytification}  \label{shilov} 

The purpose of this section is to set terminology and recall some basic notions concerning Berkovich spaces. We use \cites{be, cl_overview, cl, clt, gu, gu_trop} as our main references. 

\subsection{Berkovich analytic spaces} Let $R$ be a complete discrete valuation ring, with fraction field $F$, and  let $\ff$ be the completion of an algebraic closure of $F$, endowed with the unique extension of $|\cdot|$ as an absolute value on $\ff$. We write $\ff^\circ$ for the valuation ring of $\ff$ and $\widetilde{\ff}$ for the residue field of $\ff^\circ$. When $V$ is a separated scheme of finite type over $F$, we are interested in the Berkovich analytification of the $\ff$-scheme $V_\ff$, denoted by $V^\an$.  The step of passing to $\ff$ first is natural for our purposes and moreover some of the references that we use only consider Berkovich analytic spaces over algebraically closed fields. 

We recall that the underlying set of $V^\an$ consists of pairs $x=(y,|\cdot|)$ where $y$ is a point of $V_\ff$ and where $|\cdot| \colon \kappa(y) \to \rr$ is an absolute value on the residue field at $y$ that extends the given absolute value on $\ff$. The point $y$ is called the \emph{center} of $x$. The space $V^\an$ contains the set $V^\alg=V(\ff)$ of algebraic points of $V$ naturally as a dense subset. The underlying topological space of $V^\an$ is Hausdorff, locally compact, locally contractible and path-connected if $V_\ff$ is connected. The construction $V \mapsto V^\an$ is functorial; for example, when $L$ is a line bundle on $V$, analytification produces a line bundle $L^\an $ on $ V^\an$.

Assume that $V$ is geometrically integral and let $\ff(V)$ denote the function field of $V_\ff$. Let $y \in V_\ff$ and let $\oo_{V_\ff,y}$ denote the local ring of $V_\ff$ at $y$. For each $x \in V^\an$ with center $y$, pullback along the canonical map $\oo_{V_\ff,y} \to \kappa(y)$ gives rise to a multiplicative seminorm on $\ff(V) = \Frac \oo_{V_\ff,y}$. We denote this seminorm by $|\cdot|_x$. For $f \in \ff(V)$, we sometimes write $|f(x)|$ instead of $|f|_x$. 

\subsection{Reduction map} \label{sec:reduction}
Let $V$ be a geometrically integral, projective $F$-scheme. Write $S = \Spec R$. One way of obtaining $V^\an$ as an analytic space is as follows \cite[Section~2.7]{gu_trop}. Let $\vv$ be an integral scheme and let $\vv \to S$ be a projective and flat morphism with generic fiber isomorphic to $V$. By base change and $\varpi$-adic completion, one obtains from $\vv$ an admissible formal scheme $\mathfrak{V}$ over $\ff^\circ$. The analytic space $V^\an$ is  naturally identified with the generic fiber of $\mathfrak{V}$. The special fiber of $\mathfrak{V}$ is naturally identified with the $\widetilde{\ff}$-scheme $\vv_{\widetilde{\ff}}$. By virtue of these identifications, we obtain by \cite[Section~2.4]{be} a canonical \emph{reduction map} $\red_\vv \colon V^\an \to \vv_{\widetilde{\ff}}$.  The reduction map $\red_\vv$ is surjective and, if $\xi$ is a generic point of $\vv_{\widetilde{\ff}}$, there exists a {\em unique} $x \in V^\an$ such that $\red_\vv(x) = \xi$; see \cite[Proposition~2.4.4]{be}. We call this point the \emph{Shilov point} corresponding to $\xi$, denoted by $x_\xi$.

\section{Metrics and Green's functions} 
We continue with the setting of Section \ref{shilov} and review the notions of metrics and Green's functions.
\subsection{Model metrics} \label{sec:model}
Let $L$ be a line bundle on $V$ and let $L^\an$ be its analytification over $\ff$. One has a natural notion of continuous metrics on $L^\an$. An important class of continuous metrics on $L^\an$ is provided by models of (tensor powers of) $L$, as follows: let $\vv \to S$ be an integral, projective and flat model of $V$ and let $\ll$ be a line bundle on $\vv$ whose restriction to $V$ is equal to $L$. There exists a continuous metric  $\|\cdot\|_{\ll}$ on $L^\an$  uniquely determined by the following property. Let $s$ be a non-zero rational section of $L$ and view $s$ as a rational section of $\ll$ on $\vv$. Let $x \in V^\an$ and write $\xi = \red_\vv(x)$, viewed as a point on $\vv$. Then $\|s(x)\|_\ll$ is given by the following prescription. Let $U$ be an open neighborhood  of $\xi$ in $\vv$ such that $\ll$ is trivialized on $U$. Let $t$ be a trivializing element of $\ll(U)$ and let $f \in F(V)$ be the unique rational function on $U$ satisfying $s = f \cdot t$ on $U$. Then we put $\|s(x)\|_\ll = |f(x)|$. 

A small verification shows that the assignment $(s,x) \mapsto \|s(x)\|_\ll$ is well defined and, in particular, is independent of the choice of $U$ and of the trivializing section~$t$. One calls $\|\cdot\|_{\ll}$ the \emph{model metric} on $L^\an$ determined by the model $(\vv,\ll)$ of $(V,L)$. More generally, a model metric on $L^\an$ is any metric that is obtained by taking $e$-th roots of a model metric determined by some model of $L^{\otimes e}$ for some $e \in \zz_{>0}$. The notion of model metrics can be extended to the setting of formal models of $V$, but our assumption that $V$ is projective ensures that  for our purposes we do not need them.

\subsection{Semipositive and admissible metrics}
Let $\|\cdot\|$ be a continuous metric on $L^\an$. We call the metric $\|\cdot\|$ \emph{semipositive} if $\|\cdot\|$ is obtained as a uniform limit of model metrics on $L^\an$ associated to pairs $(\vv,\ll)$ consisting of an integral projective flat model $\vv$ of $V$ and a model $\ll$ of some tensor power  $L^{\otimes e}$ as above, such that for each $(\vv,\ll)$, the first Chern class $c_1(\ll)$ has non-negative intersection with all complete curves in the special fiber of $\vv$. We call the metric $\|\cdot\|$ \emph{admissible} if $\|\cdot\|$ can be written as a quotient of two semipositive metrics. We call the metric $\|\cdot\|$ \emph{bounded continuous} if there exists a pair $(\vv,\ll)$ where $\ll$ extends $L$ such that the quotient $\| \cdot \|/\| \cdot \|_{\ll}$ is a bounded and continuous function on $V^\an$. An admissible metric is bounded continuous.

We refer to \cites{cl_overview, cl, clt, zhsmall} for more precise definitions and extensive discussions. The definitions of semipositive and admissible metrics given in \cites{gu, gu_trop} are more involved, and work more generally for proper schemes $V$, but coincide with the current definitions since we are assuming that $V$ is projective. An important class of admissible metrics is given by the canonical metrics on a rigidified symmetric ample line bundle on an abelian variety over $F$. We discuss these canonical metrics in Section~\ref{admissible}. 

\subsection{Green's functions} \label{sec:green}
Let $D$ be an effective Cartier divisor on $V$. Following \cite[Section~2]{clt}, a \emph{Green's function} with respect to $D$ is any continuous function $g_D \colon V^\an \setminus \Supp(D) \to \rr$ obtained as follows: put $L=\oo_V(D)$ and let $\|\cdot\|$ be any admissible metric on $L^\an$. The divisor $D$ determines a canonical global section $s_D$ of $L$. Then, for each $x \in V^\an \setminus \Supp(D)$, we put $g_D(x) = -\log \|s_D(x)\|$. The notion of a Green's function readily generalizes to arbitrary Cartier divisors on $V$. When $g_D \colon V^\an \setminus \Supp(D) \to \rr$ is a Green's function on $V^\an$ with respect to the Cartier divisor $D$, the restriction of $g_D$ to $V^\alg \setminus \Supp(D)$ is a \emph{Weil function} on $V^\alg$ with respect to $D$ in the sense of \cite[Section~10.2]{lang}.

\section{Canonical metrics} \label{admissible}

Let $A$ be an abelian variety over $F$ and let $L$ be a rigidified ample line bundle on $A$ that we assume moreover to be  \emph{symmetric}. We thus have a unique isomorphism of rigidified line bundles $[-1]^*L \isom L$. Gubler constructed in \cite[Section~3.3]{gu_trop} a {\em canonical metric} $\|\cdot\|_L$ on $L^\an$ using formal model metrics and taking uniform limits. The discussion in \cite[Section~2]{zhsmall} shows that $\|\cdot\|_L$ can alternatively be obtained by working with model metrics obtained from integral projective flat models of $A$ and taking uniform limits of such. 

\subsection{Axiomatic characterization} \label{constr_canonical} This leads to the following characterization of $\|\cdot\|_L$;  cf.\ \cite[Theorem~2.2]{zhsmall}. Let $m \in \zz_{>0}$ and denote by $[m] \colon A \to A$ the multiplication-by-$m$. We have a unique isomorphism $\varphi_m \colon [m]^*L \isom L^{ \otimes m^2}$ of rigidified line bundles on $A$ (see, for instance, \cite[Proposition~I.5.5]{mb}). The canonical metric $\|\cdot\|_L$ is the unique bounded continuous metric on $L^\an$ that has the property that the isomorphism $\varphi_m$ is an isometry with respect to the canonically induced metrics on $[m]^*L$ and $L^{\otimes m^2}$. 

The canonical metric $\|\cdot\|_L$ is independent of the choice of $m$ and is invariant under extensions of the base field $F$. As can be verified immediately, if the given rigidification of $L$ is multiplied by a scalar $\lambda \in F^\times$, then the canonical metric on $L$ associated to the new rigidification is obtained by multiplying $\|\cdot\|_L$ by $|\lambda|$. Furthermore, for each $n \in \zz_{>0}$ the canonical  metric on the symmetric rigidified ample line bundle $L^{\otimes n}$ is given by $\|\cdot\|_L^{\otimes n}$.

In Section~\ref{sec:cub_canonical}, we see how $\|\cdot\|_L$ is connected to the N\'eron model of $A$ over the valuation ring of $F$. We discuss here a special case. If $A$ has good reduction over the valuation ring $R$ of $F$ then, by  \cite[II.3.5, VI.2.1]{mb}, the line bundle $L$ extends uniquely as a cubical symmetric ample line bundle $\ll$ over the N\'eron model of $A$ over $R$, which now is an abelian scheme over $R$. In this case $\|\cdot\|_L$ is just the model metric associated to $\ll$. 
The metric $\|\cdot\|_L$ is in general not a model metric, but it is always an admissible metric \cites{cl_overview, gu_trop, zhsmall}. 

\subsection{Translation by a two-torsion point} \label{sec:translation_canonical_nonarch} 
We continue to assume that $L$ is symmetric, ample and rigidified. As our considerations are analytic in nature, we work over the field $\ff$. Let $y \in A[2]$ be a two-torsion point of $A$ and write $T_y \colon A \isom A$ for translation along $y$. We have that $T_y^*L$ is a symmetric ample line bundle on $A$.

\begin{lem} \label{canonical_translate_nonarch} Let $\|\cdot\|_L$ be the canonical metric on $L^\an$. Then the pullback metric $T_y^*\|\cdot\|_L$ is a canonical metric on  $T_y^*L^\an$.
\end{lem}
\begin{proof}  It suffices to show that $T_y^* \|\cdot\|_L^{\otimes 4}$ is a canonical metric on $(T_y^*L^\an)^{\otimes 4}$.  As the metric $T_y^* \|\cdot\|_L^{\otimes 4}$ is bounded continuous, it suffices to show that there exists an isomorphism $[2]^*  (T_y^*L)^{\otimes 4} \isom (T_y^*L)^{\otimes 16}$ that is an isometry for the metrics induced from $\|\cdot\|_L$. We may construct such an isomorphism as follows. Starting from the isomorphism $\varphi_2 \colon [2]^*L\isom L^{\otimes 4}$, we obtain by pullback along~$T_y$ an isomorphism $[2]^*L = T_y^*[2]^*L \isom (T_y^*L)^{\otimes 4}$. From this, we obtain by pullback along~$[2]$ an isomorphism $[4]^*L \isom [2]^*(T_y^*L)^{\otimes 4}$. On the other hand, we have, again starting from the isomorphism $\varphi_2 \colon [2]^*L \isom L^{\otimes 4}$,  by pullback along $[2]$, an isomorphism $\varphi_4 \colon [4]^*L \isom L^{\otimes 16}$ and then by pullback along $T_y$ an isomorphism $[4]^*L = T_y^*[4]^*L \isom (T_y^*L)^{\otimes 16}$. Combining the results of applying $[2]^* \circ T_y^*$, respectively \ $T_y^* \circ [2]^*$, to $\varphi_2$, we find an isomorphism $[2]^*  (T_y^*L)^{\otimes 4} \isom (T_y^*L)^{\otimes 16}$. This isomorphism is by construction an isometry for the induced metrics from $\|\cdot\|_L$. 
\end{proof}

\subsection{N\'eron functions} \label{sec:green_AV}
Let $s$ be a non-zero rational section of the rigidified symmetric ample line bundle $L$ and write $D=\divisor_L s$. Following \cite[Section~11.1]{lang}, we call a \emph{N\'eron function} with respect to $D$  any Weil function $\varLambda \colon A^\alg \setminus \Supp(D) \to \rr$ with respect to $D$ such that there exists a rational function $h$ on $A$ whose divisor is equal to $-[2]^*D+4D$ and such that, away from the support of $\divisor h $, the identity
\begin{equation} 4\,\varLambda(x) - \varLambda(2x) = -\log|h(x)|
\end{equation}
is satisfied. For the notion of Weil function, we refer to \cite[Section~10.2]{lang}. 

We note that the isomorphism $\varphi_2 \colon [2]^*L \isom L^{\otimes 4}$ of line bundles from Section~\ref{constr_canonical} allows us to view the rational section $s^{\otimes 4}  \otimes [2]^*s^{\otimes -1}$ of the rigidified trivial line bundle $L^{\otimes 4} \otimes [2]^*L^{\otimes -1}$ as a rational function $h$ on $A$ whose divisor is equal to $-[2]^*D+4D$.
The fact that $\varphi_2$ is an isometry for the canonical metrics translates into the identity
\begin{equation} \label{mult-by-2}
 -4 \log \|s(x)\|_L + \log \|s(2x)\|_L = -\log|h(x)| 
\end{equation}
on $A^\an$ wherever each of the three terms is defined. 

Write $g_D$ for the Green's function $-\log\|s\|_L$ on $A^\an$ (cf. Section~\ref{sec:green}). The restriction of $g_D$ to $A^\alg \setminus \Supp(D)$ is a Weil function with respect to $D$. We conclude from (\ref{mult-by-2}) that the restriction of $g_D$ to $A^\alg \setminus \Supp(D)$ is in fact a N\'eron function with respect to $D$. It is shown in \cite[Section~11.1]{lang} that a N\'eron function with respect to $D$ is unique up to an additive constant.

\section{Raynaud extensions} \label{sec:raynaud} \label{sec:AbelianBerk}

Let $R$ be a complete discrete valuation ring with fraction field $F$. We briefly discuss the theory of Raynaud extensions for polarized abelian varieties over $F$, following \cite[Chapter~II]{fc}.

Assume that we are given an abelian variety $A$ over $F$ with split semistable reduction, and a rigidified ample line bundle $L$ on $A$, determining a polarization $\lambda_{A,L} \colon A \to A^t$ of $A$. Write $S = \Spec R$ and let $v$ denote the closed point of $S$. Let $G$ denote the identity component of the N\'eron model of $A$ over $S$. By our assumptions, the scheme $G$ is a semiabelian scheme over $S$. By \cite[II.3.5, VI.2.1]{mb}, the group scheme $G$ is endowed with a unique cubical ample extension $\ll_G$ of $L$. The Raynaud extension construction \cite[II.1--2]{fc} can be applied to the pair $(G,\ll_G)$ to yield a canonical short exact sequence, for the fppf topology, of commutative group schemes
\begin{equation} \label{first_Raynaud} \xymatrix{ 1 \ar[r] & T \ar[r] & \widetilde{G} \ar[r]^{\mathfrak{q}} & \bb \ar[r] & 0    } 
\end{equation}
over $S$. Here $T$ is a split torus and $\bb$ an abelian scheme. The Raynaud construction produces an ample cubical line bundle $\widetilde{\ll}_{\widetilde{G}}$ on $\widetilde{G}$ and an isomorphism $\widetilde{G}_v \isom G_v$ of special fibers. In particular, the formal completions of $G$ and $\widetilde{G}$ are identified.

Let $G^t$ denote the identity component of the N\'eron model of the dual abelian variety $A^t$. We similarly have, associated to $G^t$, a Raynaud extension
\begin{equation} \label{second_Raynaud} \xymatrix{ 1 \ar[r] & T^t \ar[r] & \widetilde{G}^t \ar[r] & \bb^t \ar[r] & 0    } 
\end{equation}
over $S$, with $T^t$ a split torus.  Let $\underline{X} = \SHom(T,\gg_m)$ and $\underline{Y} = \SHom(T^t,\gg_m)$, both viewed as  \'etale group schemes over~$S$. 
For $u \in \underline{X}(S)$ we usually denote by $\chi^u \colon T \to \gg_m$ the corresponding character. From extension (\ref{first_Raynaud}), we obtain an associated pushout diagram
\begin{equation}  \xymatrix{ 1 \ar[r] & T  \ar[d]^{\chi^u} \ar[r] & \widetilde{G} \ar[d]^{e_u} \ar[r]^{\mathfrak{q}} & \bb \ar@{=}[d]  \ar[r] & 0  \\ 
1 \ar[r] & \gg_m \ar[r] & \widetilde{G}_u \ar[r] & \bb \ar[r] & 0  }   
\end{equation}
in the category of commutative $S$-group schemes with the fppf topology. The pushout construction gives rise to a morphism  $\varOmega \colon \underline{X} \to \bb^t$ of group schemes by sending $u$ to the class of the algebraically trivial line bundle on $\bb$ determined by the $\gg_m$-torsor $\widetilde{G}_u$. Similarly, the  extension (\ref{second_Raynaud}) determines an assignment $u' \mapsto e_{u'}$ and a morphism of group schemes $\varOmega' \colon \underline{Y} \to \bb$.

The polarization $\lambda_{A,L} \colon A \to A^t$ associated to $L$ extends canonically into an isogeny $\lambda_{G,L} \colon G \to G^t$. Functoriality of the Raynaud extension gives an isogeny 
$\lambda_{T,L} \colon T \to T^t$, an isogeny $\lambda_{\widetilde{G},L} \colon \widetilde{G} \to \widetilde{G}^t$ and a  polarization $\lambda_{\bb,L} \colon \bb \to \bb^t$. These morphisms fit together into a morphism of short exact sequences of commutative group schemes
\begin{equation} \label{morphism_ses} \xymatrix{ 1 \ar[r] & T  \ar[d]^{\lambda_{T,L}} \ar[r] & \widetilde{G} \ar[d]^{\lambda_{\widetilde{G},L}} \ar[r]^{\mathfrak{q}} & \bb \ar[d]^{\lambda_{\bb,L} } \ar[r] & 0  \\ 
1 \ar[r] & T^t \ar[r] & \widetilde{G}^t \ar[r] & \bb^t \ar[r] & 0  }   
\end{equation}
over $S$. The morphism $\lambda_{T,L}$ induces by pullback a morphism $\varPhi \colon \underline{Y} \to \underline{X}$ of group schemes, and the diagram
\[ \xymatrix{ \underline{Y} \ar[r]^{\varPhi} \ar[d]^{\varOmega'} & \underline{X} \ar[d]^{\varOmega} \\
\bb \ar[r]^{\lambda_{\bb,L}} & \bb^t } \]
commutes. If $L$ defines a {\em principal} polarization, then $\varPhi \colon \underline{Y} \to \underline{X}$ and each of the maps $\lambda$ in \eqref{morphism_ses} is an isomorphism.

In addition to the above canonical data associated to $(A,L)$, we may and do pick some non-canonical further data as follows. For these extra data, a finite extension of the field $F$ may be needed, but this is harmless for our purposes. Denote the generic fibers of $\widetilde{G}$ and $\bb$ by $E$ and $B$, respectively. Let $q \colon E \to B$ be the map induced by $\mathfrak{q} \colon \widetilde{G} \to \bb$. Set $X=\underline{X}(F)$, $Y=\underline{Y}(F)$. First of all, we may and do pick an injective lift 
\begin{equation} \label{lift} \xymatrix{ Y \ar[d]^{\upsilon} \ar[rd]^{\varOmega'_\eta} & \\
E(F) \ar[r]^{q} & B(F) } 
\end{equation}
of the map $\varOmega'_\eta$. Similarly, we may pick an injective lift $\upsilon^t \colon X \to E^t(F)$ of the quotient map $E^t(F) \to B^t(F)$. We view $Y$ as a subgroup of $E(F)$ and $X$ as a subgroup of $E^t(F)$ via the maps $\upsilon$, $\upsilon^t$. We can arrange that $e_u(u')=e_{u'}(u)$ for $u \in X$, $u' \in Y$. 

Let $\pp$ denote the Poincar\'e bundle on $\bb \times_S \bb^t$, endowed with its canonical rigidification, and let $P$ be its generic fiber. We may further suppose that the map 
\[ t \colon Y \times X \to P \, , \quad (u',u) \mapsto t(u',u) = e_u(u')=e_{u'}(u)  \]
defines a trivialization of the invertible sheaf $(\varOmega'_\eta \times \varOmega_\eta)^*P$ on $Y \times X$.

Next, we may and do pick an ample cubical line bundle $\mm$ on $\bb$ such that $\mm$ determines the polarization $\lambda_{\bb,L} \colon \bb \to \bb^t$ and such that $\mathfrak{q}^* \mm$ is identified with $\widetilde{\ll}_{\widetilde{G}} $ as cubical line bundles. Denote by $M$ the generic fiber of $\mm$, which is thus a rigidified ample line bundle on $B$. If $L$ determines a principal polarization, then so does $M$. 

Let $m_\bb \colon \bb \times_S \bb \to \bb$ denote the additively written group operation of $\bb$ and denote by $p_1, p_2 \colon \bb \times_S \bb \to \bb$ the projections onto the first, respectively the second, factor. We have a canonical identification
\begin{equation} \label{canonical_identification}
 m_\bb^*\mm \otimes p_1^*\mm^{-1} \otimes p_2^*\mm^{-1} = (\id, \lambda_{\bb,L})^* \pp 
\end{equation}
of rigidified line bundles on $\bb \times_S \bb$.
  
 Finally, we may and do, pick, given our choices of $\upsilon, \upsilon'$ and $\mm$, a trivialization $c \colon Y \to M$ of the rigidified line bundle $(\varOmega'_\eta)^*M$ on $Y$ such that via the restriction  of the canonical identification given by (\ref{canonical_identification}) to the generic fiber, the trivialization $c$ satisfies the relation
\begin{equation} \label{Riemann_period} c(u' + v') \otimes c(u')^{-1} \otimes c(v')^{-1} = t(u',\varPhi(v')) 
\end{equation}
for all $u', v' \in Y$. 

We call the data $(M,\varPhi,c)$ a {\em triple} associated to the rigidified ample line bundle~$L$. For $u \in X$, we denote by $E_u$ the rigidified line bundle determined by the generic fiber of $\widetilde{G}_u$. By \cite[Theorem~3.6]{frss}, two triples $(M_1,\varPhi_1,c_1)$ and $(M_2,\varPhi_2,c_2)$ define the same rigidified line bundle if and only if $\varPhi_1= \varPhi_2$ and there exists $u \in X$ such that $M_1 \otimes M_2^{-1} \cong E_u$ and $c_1 \otimes c_2^{-1} \cong \varepsilon_u$. Here, for $u \in X$, we denote by $\varepsilon_u \colon Y \to E_u(F)$ the composite of the inclusion $\upsilon \colon Y \to E(F)$ and the map $e_u \colon E(F) \to E_u(F)$. It is straightforward to extend the notion of associated triple to the setting of rigidified ample line bundles defined over $\mathbb{F}$.

Denote by $\|\cdot\|_M$ the model metric on $M^\an$ derived from $\mm$ and by $\|\cdot\|_P$ the model metric on $P^\an$ derived from $\pp$. By construction, the rigidification of $M$ is an isometry for the metric $\|\cdot\|_M$ and the canonical rigidification of $P$ is an isometry for the metric $\|\cdot\|_P$. For $u' \in Y$, $v \in X$, we put
\begin{equation} \label{def_b_and_ctrop}
 b(u',v) = -\log \| t(u',v) \|_P \, , \quad c_\trop(u') = -\log \|c(u')\|_M \, . 
\end{equation}
Then $b$ is a $\zz$-valued bilinear map on $Y \times X$ and $c_\trop$ is a $\zz$-valued function on $Y$.  From (\ref{canonical_identification}) and (\ref{Riemann_period}), we derive the fundamental identity
\begin{equation} \label{b_and_c_I} 
b(u',\varPhi(v')) = c_\trop(u'+v') - c_\trop(u') - c_\trop(v') 
\end{equation}
for all $u', v' \in Y$. The assumption that $L$ is ample implies that the map $ Y \times Y \to \zz$ given by sending $(u',v') \in Y \times Y$ to $b(u',\varPhi(v'))$ is {\em positive definite}. 

The tuple $(X,Y,\varPhi,b)$ constitutes a \emph{polarized tropical abelian variety}. Let $X^*=\Hom(X,\zz)$ and $X^*_\rr = X^* \otimes \rr = \Hom(X,\rr)$. The bilinear map $b $ realizes $Y$ as a subgroup of $X^*$ of finite index. We write $\varSigma$ for the real torus $X_\rr^*/Y$.  We note that $\varSigma$ is a point if and only if $A$ has good reduction over~$R$.

\section{Non-archimedean uniformization of abelian varieties}  \label{sec:unif}

In \cite[Section~6.5]{be}, the classical rigid analytic uniformization of abelian varieties (see \cite{bl} for a thorough treatment) is established in the context of Berkovich analytic spaces. We discuss the matter here briefly. Our main references are \cites{frss, gu}. We continue with the notation and assumptions from Section \ref{sec:raynaud}.  The map $\upsilon$ from \eqref{lift} induces, upon analytification, an exact sequence
\[ \xymatrix{ 0 \ar[r] & Y \ar[r]^\upsilon & E^\an \ar[r]^p & A^\an   \ar[r] & 0 } \]
of analytic groups. We refer to the map $p \colon E^\an \to A^\an$ as the {\em non-archimedean uniformization} of $A$ and we call the group $Y$ the group of \emph{periods} of $A^\an$. We have a canonical isomorphism $p^*L^\an \isom q^*M^\an$ of rigidified analytic line bundles on $E^\an$.

\subsection{Tropicalization}  \label{tropicalization}  Let $\langle \cdot,\cdot \rangle \colon X \times X_\rr^* \to \rr$ denote the natural evaluation pairing. The {\em tropicalization map} $\trop \colon T^\an \to X_\rr^*$ is given by the rule
\begin{equation}
 \langle u, \trop(z) \rangle = -\log|\chi^u(z)| \, , \quad u \in X \, , \, z \in T^\an \, . 
\end{equation}
The tropicalization map is a surjective homomorphism and extends in a natural way to a surjective homomorphism $\trop \colon E^\an \to X_\rr^*$ by setting
\begin{equation} \label{def_trop}
 \langle u, \trop(z) \rangle = -\log \|e_u(z) \|_{E_u} \, , \quad u \in X \, , \, z \in E^\an \, . 
\end{equation}
Here $\| \cdot \|_{E_u}$ is the model metric on $E_u^\an$ determined by the $\gg_m$-torsor $\widetilde{G}_u$ on $\bb$.

Write $\varSigma = X_\rr^*/Y$. The homomorphism $\trop \colon E^\an \to X_\rr^*$ gives rise to a morphism of short exact sequences
\begin{equation} \label{def_tau} \xymatrix{    0 \ar[r] & Y \ar[r]^\upsilon \ar@{=}[d] & E^\an \ar[r]^p \ar[d]^{\trop} & A^\an   \ar[r] \ar[d]^\tau & 0 \\
0 \ar[r] & Y \ar[r] & X_\rr^* \ar[r] & \varSigma \ar[r] & 0  } 
\end{equation}
The map $\tau \colon A^\an \to \varSigma$ turns out to be a deformation retraction. Following \cite[Section~4]{frss} and \cite[Example~7.2]{gu}, there exists a natural section $\sigma \colon X_\rr^* \to E^\an$  of $\trop$. We denote by 
$\iota \colon \varSigma \to A^\an$ the resulting section of $\tau$.  We usually view $\varSigma=X_\rr^*/Y$ as a subspace of $A^\an$ via the map $\iota$. When viewed as a subspace of $A^\an$ via $\iota$, we call $\varSigma$ the \emph{canonical skeleton} of $A^\an$. 
\begin{lem} \label{X-star-mod-Y} The restriction of the retraction map $\tau \colon A^\an \to \varSigma=X_\rr^*/Y$ to $A(F)$  induces  an isomorphism of groups $A(F)/G(R) \isom X^*/Y$.
\end{lem}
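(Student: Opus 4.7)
The plan is to break the lemma into three steps: show that the analytic uniformization $p$ restricts to a surjection $E(F) \twoheadrightarrow A(F)$; show that $\trop(E(F)) = X^* \subseteq X_\rr^*$; and identify $p(\ker\trop \cap E(F))$ with $G(R)$. Combined with the commutative diagram \eqref{def_tau}, these yield that $\tau|_{A(F)}$, via the identification $A(F) = E(F)/Y$, is the map induced by $\trop$, whence its image is $X^*/Y$ and its kernel is $G(R)$.

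For the first step, I would use a Galois descent argument. Given $x \in A(F)$, pick any lift $\tilde x \in E(\ff)$, which exists by the exactness (on $\ff$-points) of $0 \to Y \to E(\ff) \to A(\ff) \to 0$. For $\gamma \in \Gal(\bar F / F)$ extended continuously to $\ff$, the difference $\gamma \tilde x - \tilde x$ lies in $Y$ because $x$ is $F$-rational. Because the absolute value on $\ff$ is Galois-invariant and each model metric $\|\cdot\|_{E_u}$ is derived from a model over $R$, the tropicalization $\trop \colon E(\ff) \to X_\rr^*$ is Galois-invariant; hence $\trop(\gamma \tilde x - \tilde x) = 0$ in $X_\rr^*$. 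Since $\trop|_Y$ agrees with the natural inclusion $Y \hookrightarrow X_\rr^*$, which is injective by non-degeneracy of $b$, this forces $\gamma \tilde x = \tilde x$, i.e., $\tilde x \in E(F)$.

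For the second step, I would compute $\trop$ on $F$-points directly. For $z \in E(F)$ and $u \in X$, the point $q(z) \in B(F)$ extends by properness of $\bb/S$ to $q(z) \in \bb(R)$, and $e_u(z)$ becomes an $F$-rational section of the line bundle $\widetilde{G}_u$ over this $R$-point. A local trivialization of $\widetilde{G}_u$ around $q(z)$ then expresses $e_u(z)$ as $f$ times a trivializing section for some $f \in F^\times$, giving $\langle u, \trop(z)\rangle = -\log\|e_u(z)\|_{E_u} = \ord(f) \in \zz$; thus $\trop(E(F)) \subseteq X^*$. The reverse inclusion follows at once from the inclusion $T(F) \hookrightarrow E(F)$ and the obvious surjection $\trop \colon T(F) \twoheadrightarrow X^*$, since $T$ is a split torus.

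For the final step, I would identify $p(\ker\trop \cap E(F))$ with $G(R)$. Unwinding definitions, $\tau(x) = 0$ for $x \in A(F)$ iff some lift $\tilde x \in E(F)$ satisfies $\trop(\tilde x) \in Y$, and after adjusting $\tilde x$ by $\upsilon(\trop(\tilde x)) \in E(F)$ we may assume $\trop(\tilde x) = 0$. Thus $\ker(\tau|_{A(F)}) = p(\ker\trop \cap E(F))$. The analytic subgroup $\ker\trop \subset E^\an$ is the Berkovich generic fiber of the formal completion of the semiabelian $R$-scheme $\widetilde{G}$, which by Raynaud's construction of the extension \eqref{first_Raynaud} is canonically identified with the formal completion of $G$; its $F$-rational points therefore coincide with $G(R)$. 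The main obstacle is this last identification, since it requires carefully tracking the compatibility between the formal completions of $\widetilde{G}$ and $G$ along their common special fiber, a point that is intrinsic to Raynaud's construction but nontrivial to spell out.
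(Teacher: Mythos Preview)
Your proof is correct, but it is organized differently from the paper's and in one place uses a genuinely different idea. The paper argues via the chain of identifications
\[
A(F)/G(R)\;=\;E(F)/\bigl(\upsilon(Y)\cdot G(R)\bigr)\;=\;\widetilde{G}(F)/\bigl(\upsilon(Y)\cdot\widetilde{G}(R)\bigr)\,,\qquad
\widetilde{G}(F)/\widetilde{G}(R)\;=\;T(F)/T(R)\;=\;\Hom(X,F^{\times}/R^{\times})\,,
\]
all taken from \cite[p.~78]{fc}, and then observes that $\trop$ induces the valuation isomorphism $\Hom(X,F^{\times}/R^{\times})\isom X^{*}$. Your Steps~2 and~3 are exactly an unpacking of these identifications: your Step~2 is the isomorphism $\widetilde{G}(F)/\widetilde{G}(R)\isom X^{*}$ (using $T(F)\hookrightarrow E(F)$ for surjectivity and the integrality of model metrics for the containment), and your Step~3 is the equality $G(R)=\widetilde{G}(R)$ coming from the identification of formal completions that underlies the Raynaud extension. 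Where you genuinely depart from the paper is Step~1: instead of invoking the identification $A(F)=E(F)/\upsilon(Y)$ from \cite{fc} directly, you give a clean Galois-descent argument, exploiting that $\trop$ is Galois-invariant and injective on $Y$. This is a nice alternative, though note that it tacitly relies on two facts you should name: that the uniformization $p$ is (the base change of) a rigid-analytic morphism over $F$ and hence Galois-equivariant on $\ff$-points, and that $\ff^{\Gal(\bar F/F)}=F$ (the Ax--Sen--Tate theorem) so that Galois invariance of $\tilde x\in E(\ff)$ actually forces $\tilde x\in E(F)$. With those points made explicit, your argument is complete; the paper's route is shorter only because it defers all of this content to \cite{fc}.
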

\begin{proof} We have canonical identifications (cf.\ \cite[p.~78]{fc})
\[ \begin{split}  A(F)/G(R) & = E(F)/\upsilon(Y) \cdot G(R) \\
& = \widetilde{G}(F)/\upsilon(Y) \cdot G(R) \\
& = \widetilde{G}(F)/ \upsilon(Y) \cdot \widetilde{G}(R) \end{split} \]
and
\[  \begin{split} \widetilde{G}(F)/\widetilde{G}(R) & = T(F) / T(R) \\ 
& = \Hom(X, F^*/R^*) \, .
 \end{split} \]
The restriction of the tropicalization map $\trop \colon E^\an \to X_\rr^*$ to $E(F)=\widetilde{G}(F)$  induces an isomorphism of groups $\widetilde{G}(F)/\widetilde{G}(R)=\Hom(X,F^*/R^*) \isom X^*$. This descends to an isomorphism $\widetilde{G}(F)/ \upsilon(Y) \cdot \widetilde{G}(R) \isom X^*/Y$.
Since, by construction, the retraction map $\tau \colon A^\an \to \varSigma$ descends from $\trop$, we see that $\tau$ sends 
$A(F)$ onto $X^*/Y$ with kernel $G(R)$.
\end{proof}
Let $\nn$ be the N\'eron model of $A$ over $S = \Spec R$, let $\varPhi_\nn$ be the group of components of its special fiber and let $\mathrm{sp} \colon \nn(R) \to \varPhi_\nn$ denote the specialization map. We note that $\mathrm{sp} $ induces a group isomorphism $\nn(R)/G(R) \isom \varPhi_\nn$. As $\nn(R)=A(F)$, we immediately deduce from Lemma~\ref{X-star-mod-Y} the following.
\begin{cor} \label{construct_canonical} The map $ \varPhi_\nn \to X^*/Y$ that sends $\mathrm{sp}(x)$ for $x \in \nn(R)$ to $\tau(x)$ is a group isomorphism.
\end{cor}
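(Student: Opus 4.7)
The plan is to observe that the map $\mathrm{sk}$ is essentially the composition of two isomorphisms that have already been established, so the proof reduces to checking well-definedness and then composing.

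First, I would recall that because $\nn$ is a smooth group scheme and $R$ is a complete (in particular Henselian) discrete valuation ring, every component of the special fiber has an $R$-valued point, and by the Néron mapping property $\nn(R) = A(F)$. Thus the specialization map $\mathrm{sp}_\nn \colon A(F) = \nn(R) \to \varPhi_\nn$ is surjective, with kernel precisely $G(R)$ (the set of $R$-points specializing into the identity component). Consequently $\mathrm{sp}_\nn$ factors through an isomorphism of groups $A(F)/G(R) \isom \varPhi_\nn$, as noted in the text.

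Next I would invoke Lemma~\ref{X-star-mod-Y}, which furnishes the other half: the retraction map $\tau \colon A^\an \to \varSigma$, when restricted to the algebraic points $A(F)$, is a group homomorphism onto $X^*/Y$ with kernel exactly $G(R)$, hence induces an isomorphism of groups $A(F)/G(R) \isom X^*/Y$.

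Finally I would assemble the diagram. Since both homomorphisms $\mathrm{sp}_\nn$ and $\tau|_{A(F)}$ have the same kernel $G(R)$, there is a unique group isomorphism $\mathrm{sk}$ making
\[
\xymatrix{
A(F) \ar[r]^{\mathrm{sp}_\nn} \ar[d]_{\tau} & \varPhi_\nn \ar[dl]^{\mathrm{sk}} \\
X^*/Y &
}
\]
commute, namely $\mathrm{sk}(\mathrm{sp}_\nn(x)) = \tau(x)$; this is both well-defined (independence of the choice of lift $x$ follows from $\ker \mathrm{sp}_\nn = G(R) = \ker \tau|_{A(F)}$) and a group isomorphism (as the composite of the inverse of the first isomorphism with the second). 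There is no real obstacle here; the only point requiring a moment of care is that $\tau$ actually takes values in $X^*/Y \subset \varSigma = X_\rr^*/Y$ when restricted to $A(F)$, which is precisely what Lemma~\ref{X-star-mod-Y} guarantees.
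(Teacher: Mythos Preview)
Your proof is correct and follows essentially the same approach as the paper: the paper notes that $\mathrm{sp}_\nn$ induces a group isomorphism $\nn(R)/G(R) \isom \varPhi_\nn$, uses $\nn(R)=A(F)$, and then invokes Lemma~\ref{X-star-mod-Y} to conclude. You have simply spelled out the well-definedness and composition more explicitly.
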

Compare with \cite[Corollary~III.8.2]{fc}.

This has the following consequence. Let $\mathcal{V}$ be a projective integral model of $A$ containing $\nn$ as an open subscheme. In particular, we have an open immersion $\nn_{\widetilde{\mathbb{F}}} \hookrightarrow \mathcal{V}_{\widetilde{\mathbb{F}}}$ of special fibers. Let $\xi \in \varPhi_\nn$ and let $x_\xi \in A^\an$ denote the Shilov point determined by  the generic point of the irreducible component corresponding to $\xi$ in $\mathcal{V}_{\widetilde{\mathbb{F}}}$. Then $x_\xi$ is an element of the canonical skeleton $\varSigma$ of $A^\an$.

\subsection{The invariant $I(A,\lambda)$} \label{sec:non_arch_I-inv}
Let $\lambda \colon A \isom A^t$ be a principal polarization of $A$ and let $L$ be any rigidified symmetric ample line bundle on $A$ determining $\lambda$. Let $s$ be a non-zero global section of $L$. Let $\|\cdot\|_L$ denote the  canonical metric on $L^\an$. 

We define
\begin{equation} \label{def_nonarch_lambda_bis}
I(A,\lambda) =  \log \|s\|_{L,\sup} -\int_{A^\an} \log \|s\|_{L} \, \d \, \mu_{H}  \, ,
\end{equation}
where 
\begin{equation}
\|s\|_{L,\sup} = \sup_{x \in A^\an} \|s(x)\|_L 
\end{equation}
is the supremum norm of $s$ and where  $\mu_{H}$  is the pushforward, along the inclusion $\iota \colon \varSigma \hookrightarrow A^\an$, of the Haar measure of unit volume on the canonical skeleton $\varSigma$ of~$A^\an$.
\begin{lem} \label{indep_of_choice_nonarch} The quantity $I(A,\lambda)$ 
is independent of the choice of symmetric ample line bundle $L$, of section $s$ and of rigidification of $L$ and hence defines an invariant of the principally polarized abelian variety $(A,\lambda)$.
\end{lem}
\begin{proof}
Choose one symmetric ample line bundle $L$ on $A$ determining $\lambda$. A change of rigidification results in a replacement of $\|\cdot\|_L$ by a scalar multiple of $\|\cdot\|_L$. Moreover, the space $H^0(A,L)$ of global sections of $L$ is one-dimensional. It follows immediately that the quantity $I(A,\lambda)$ as defined in (\ref{def_nonarch_lambda_bis}) is independent of the choice of rigidification of $L$ and of section $s$. Now any other symmetric ample line bundle on $A$ determining $\lambda$ is given by $T_y^*L$ for some two-torsion point $y$ of $A$. By Lemma~\ref{canonical_translate_nonarch}, the pullback $T_y^*\|\cdot\|_L$ of the canonical metric on $L^\an$ is a canonical metric on $T_y^*L^\an$. As the measure $\mu_H$ is translation-invariant we obtain that the quantity $I(A,\lambda)$ is also independent of the choice of $L$. 
\end{proof}

\section{Non-archimedean theta functions} \label{sec:thetas}

We briefly review the theory of non-archimedean theta functions. A reference for this section is \cite[Sections 3--4]{frss}. We continue with the notation and assumptions from Sections~\ref{sec:raynaud} and~\ref{sec:unif}. In particular we work with a symmetric rigidified ample line bundle $L$ on the abelian variety $A$ over the complete discretely valued field $F$.

Recall that we have a polarized tropical abelian variety $(X,Y,\varPhi,b)$ associated to the pair $(A,L)$ by Raynaud's construction.
The bilinear map $b$ gives rise to an inner product $[\cdot,\cdot]$ on $X_\rr^*$.
 In these terms the identity in (\ref{b_and_c_I}) can be rewritten as
\begin{equation} \label{b_and_c_II} 
[u',v'] = c_\trop(u'+v') - c_\trop(u') - c_\trop(v') \, , \quad u', v' \in Y \, . 
\end{equation}
Recall that we have a canonical isomorphism $p^*L^\an \isom q^*M^\an$ of rigidified analytic line bundles on $E^\an$. Following \cite[Definition~3.14]{frss}, a {\em theta function} for $L$ is any global section $f \in H^0(E^\an,q^*M^\an)$ that descends to a section of  $L^\an$ along $p$. 

If $L$ defines a principal polarization, we call $f$ a {\em Riemann theta function} for $L$. A Riemann theta function is unique up to translations by elements from $Y$ and up to multiplication by scalars.

Let $f$ be a non-zero theta function for $L$. For a suitable triple $(M,\varPhi,c)$ associated to $L$ we have a functional equation
\begin{equation} \label{func_eqn_theta}
 f(z) = f(z \cdot u') \otimes c(u') \otimes e_{\varPhi(u')}(z) 
\end{equation}
for $z \in E^\an$ and $u' \in Y$; see \cite[Proposition~3.13]{frss}. By \eqref{def_b_and_ctrop} and \eqref{def_trop}, this yields the functional equation
\begin{equation} \label{additive_cocycle}
 -\log \|f(z)\|_{q^*M} = -\log \|f(z \cdot u') \|_{q^*M} + c_\trop(u') + \langle \varPhi(u'), \trop(z) \rangle 
\end{equation}
for $z \in E^\an$ and $u' \in Y$. The function $f$ does not vanish on the image of the section $\sigma \colon X_\rr^* \to E^\an$. Following \cite[Section~4.3]{frss}, we define
\begin{equation} \label{def-fbar}  \overline{f} \colon X_\rr^* \to \rr \, , \quad \nu \mapsto -\log \| f (\sigma(\nu)) \|_{q^*M}  \, . 
\end{equation}
The map $\overline{f}$ is called the \emph{tropicalization} of the theta function $f$. 
From (\ref{additive_cocycle}), we obtain the relation
\begin{equation} \label{cocycle}
 \overline{f}(\nu) = \overline{f}(\nu + u') + c_\trop(u') + \langle \varPhi(u'), \nu \rangle  
\end{equation}
for $\nu \in X_\rr^*$, $u' \in Y$. 

A {\em tropical cocycle} on $X_\rr^*$ with respect to the $Y$-action by translations is any function $\mathbf{z} \colon Y \times X_\rr^* \to \rr$ that satisfies 
\begin{equation} \mathbf{z}(u'+v',\nu) = \mathbf{z}(u',v'+\nu) + \mathbf{z}(v',\nu) \, , \quad u', v' \in Y \, , \, \nu \in X_\rr^* \, . 
\end{equation}
Let $(M,\varPhi,c)$ be a triple for $L$. From (\ref{b_and_c_II}), one deduces that the function $c_\trop(u') + \langle \varPhi(u'), \nu \rangle$ that appears in \eqref{cocycle} is a tropical cocycle on $X_\rr^*$ with respect to $Y$.

\subsection{Tropical Riemann theta function} \label{sec:trop_Riemann_theta}
Following \cite{frss}, the \emph{tropical Riemann theta function} associated to $(A,L)$ is the function $\varPsi \colon X^*_\rr \to \rr$ given by
\begin{equation} \label{Riemann_theta} 
\begin{split} \varPsi(\nu)  & =  \min_{u' \in Y} \left\{ \frac{1}{2} [u',u'] + [u',\nu] \right \}  \\
& =  \min_{u' \in Y} \left\{ \frac{1}{2} [u',u'] + \langle \varPhi(u'),\nu \rangle \right \} 
\end{split}
\end{equation}
for $\nu \in X^*_\rr$.  We note that  $\varPsi=-\varTheta$, where $\varTheta$ is the theta function considered in  \cite{mz}. As is easily checked, we have a functional equation
\begin{equation} \label{func_trop_theta} \varPsi(\nu) = \varPsi(\nu+u') + [ u',\nu ] + \frac{1}{2}[u',u'] 
\end{equation}
for all $\nu \in X^*_\rr$ and $u' \in Y$.

\subsection{Translations of line bundles}
Let $z' \in E^t(F)$. We denote by $L_{z'}$ the rigidified translation-invariant line bundle on $B$ corresponding to $q'(z') \in B^t(F)$ and let $L_{z'}^\times$ denote the associated $\gg_m$-torsor. We may view $L_{z'}^\times$ as an extension of $B$ by $\gg_m$. For  $u' \in Y$, we have canonical identifications of fibers
\begin{equation} 
L_{z', \varOmega_\eta(u')} = P_{\varOmega_\eta(u'),q'(z')} = E^t_{u',z'} \, . 
\end{equation}
As is explained in \cite[Section~3.5]{frss}, we can view the assignment $u' \mapsto e_{u'}(z')$ for $u' \in Y$ naturally as a homomorphism $Y \to L_{z'}^{\times,\an}$. We denote this homomorphism by $c_{z'}$. The rigidified line bundle $L_{z'}$ has a unique rigidified extension over $\bb$. We denote by $\|\cdot\|_{L_{z'}}$ the associated model metric on $L_{z'}^\an$. We set $c_{z',\trop}(u') = - \log \| c_{z'}(u') \|_{L_{z'}} $ for $u' \in Y$. 
\begin{lem} \label{c_z_prime_formula} Let $z \in E(F)$ and set $z' = \lambda_{E,L}(z)$.  The equality $  c_{z',\trop}(u') = \langle \varPhi(u'),\trop(z) \rangle $ holds.
\end{lem}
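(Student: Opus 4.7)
The plan is to split the identity into two functoriality statements. First, I would show that
\[
c_{z',\trop}(u') \;=\; \langle u',\trop(z')\rangle\,,
\]
where on the right the tropicalization map $\trop\colon E^{t,\an}\to Y^{*}_\rr$ of the dual Raynaud extension is defined by the same recipe as for $E$. Second, I would invoke functoriality of tropicalization under $\lambda_{E,L}$ to conclude
\[
\langle u',\trop(\lambda_{E,L}(z))\rangle \;=\; \langle \varPhi(u'),\trop(z)\rangle\,.
\]
Combining these gives the lemma.

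For the first equality, both the model metric $\|\cdot\|_{L_{z'}}$ on $L_{z'}^\an$ and the model metric $\|\cdot\|_{E^t_{u'}}$ descend from the model metric $\|\cdot\|_U$ on the Poincar\'e bundle $\uu$ over $\bb\times_S\bb^t$. The canonical extension of $L_{z'}$ over $\bb$ is the pullback of $\uu$ along $(\id,\overline{q'(z')})\colon\bb\to\bb\times_S\bb^t$, while the extension $\widetilde{G}^t_{u'}$ of $\bb^t$ by $\gg_m$, viewed as a rigidified line bundle on $\bb^t$ via biduality, is the pullback of $\uu$ along $(\overline{\Omega'(u')},\id)\colon\bb^t\to\bb\times_S\bb^t$. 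By Remark~\ref{pullback_model} both resulting model metrics restrict isometrically to the common fiber $U_{\Omega'(u'),q'(z')}$. Under the canonical identification $L_{z',\Omega'(u')}=U_{\Omega'(u'),q'(z')}=E^t_{u',z'}$ the element $c_{z'}(u')$ corresponds to $e_{u'}(z')$, so taking $-\log$ of the norms yields the first equality.

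For the second equality, the key input is that by construction $\varPhi\colon Y\to X$ is the pullback of characters along the torus-isogeny $\lambda_{T,L}\colon T\to T^t$, so that $\chi^{\varPhi(u')}=\chi^{u'}\circ\lambda_{T,L}$. Applying functoriality of the push-out construction to the morphism of extensions (\ref{morphism_ses}), one obtains a canonical isomorphism of $\gg_m$-extensions of $\bb$
\[
\widetilde{G}_{\varPhi(u')}\;\isom\;\lambda_{\bb,L}^{*}\,\widetilde{G}^{t}_{u'}
\]
which preserves cubical structures, and is therefore an isometry for the induced model metrics. On generic fibers, this isomorphism identifies the point $e_{\varPhi(u')}(z)$ over $q(z)\in\bb$ with the point $e_{u'}(\lambda_{E,L}(z))=e_{u'}(z')$ over $q'(z')=\lambda_{\bb,L}(q(z))\in\bb^t$, so $\|e_{u'}(z')\|_{E^t_{u'}}=\|e_{\varPhi(u')}(z)\|_{E_{\varPhi(u')}}$. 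Applying $-\log$ produces the second equality.

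The only real content is the compatibility of the various model metrics with push-out and pull-back of the Raynaud extensions, which I expect to be the step requiring the most care; once this bookkeeping is in place, everything else reduces to unwinding the definitions of $\trop$, $c_{z'}$, and $\varPhi$.
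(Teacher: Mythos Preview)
Your proposal is correct and follows essentially the same route as the paper's proof: pass from $c_{z',\trop}(u')$ to $\langle u',\trop'(z')\rangle$ via the compatibility of the model metrics on $L_{z'}$ and $E^t_{u'}$ through the Poincar\'e bundle, then use functoriality of tropicalization under $\lambda_{E,L}$ (equivalently, the isometry $\widetilde{G}_{\varPhi(u')}\isom \lambda_{\bb,L}^*\widetilde{G}^t_{u'}$) to rewrite this as $\langle \varPhi(u'),\trop(z)\rangle$. The paper's version is terser---it simply strings the identities together and invokes $\trop'(\lambda_{E,L}(z))=\varPhi^*(\trop(z))$ without unpacking it---but your more explicit justification of the metric compatibilities is exactly what underlies those steps.
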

\begin{proof} Let $\trop' \colon E^{t,\an} \to Y^*_\rr$ denote the tropicalization map of $E^{t,\an}$. We compute
\[ \begin{split} c_{z',\trop}(u')& = - \log \| c_{z'}(u') \|_{L_{z'}}  \\ 
& = -\log \| e_{u'} (\lambda_{E,L}(z) \|_{E^t_{u'}} \\
 & = \langle u', \trop'(\lambda_{E,L}(z)) \rangle \\
 & = \langle u', \varPhi^*(\trop(z)) \rangle \\
 & = \langle \varPhi(u'),\trop(z) \rangle  \, . \end{split} \]
 The lemma follows.
 \end{proof}
 It is straightforward to extend the definitions of $c_{z'}$ and $c_{z',\trop}$ to the setting that $z' \in E^{t,\alg}$. For each $y \in A^\alg$, we denote by $T_y $ the translation along~$y$.
\begin{lem} \label{triple_of_translate} Assume that $(M,\varPhi,c)$ is a triple for $L$. Let $z \in E^\alg$ and set $z' = \lambda_{E,L}(z)$. Further set $y=p(z)$, $w=q(z)$ and write $c' = c \otimes c_{z'}$. Then $T_w^* M \cong M \otimes L_{z'}$ and $(T_w^*M,\varPhi,c')$ is a triple for the translated line bundle $T_y^*L$.
\end{lem}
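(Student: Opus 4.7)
For $T_w^*M \cong M \otimes L_{z'}$, I would apply the theorem of the square to the rigidified ample line bundle $M$ on $B$: this gives a canonical isomorphism of rigidified line bundles $T_w^*M \otimes M^{-1} \isom L_{\lambda_{\bb,M}(w)}$, where for $\xi \in B^t$ the line bundle $L_\xi$ denotes the associated rigidified translation-invariant line bundle on $B$. Since $M$ determines the polarization $\lambda_{\bb,L}$, we have $\lambda_{\bb,M} = \lambda_{\bb,L}$, and reading the commutative diagram (\ref{morphism_ses}) on generic fibers yields
\[
\lambda_{\bb,L}(w) \,=\, \lambda_{\bb,L}(q(z)) \,=\, q'(\lambda_{E,L}(z)) \,=\, q'(z') \, ,
\]
which is by definition the class in $B^t$ producing $L_{z'}$. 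This gives the first assertion.

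To verify that $(T_w^*M, \varPhi, c')$ is a triple, I would check three things. First, $T_w^*M$ is cubical and ample (the cubical structures coming from those on $M$ and on the rigidified algebraically trivial bundle $L_{z'}$), and it determines the same polarization as $M$, so the induced map on character lattices is still $\varPhi$. Second, $c' = c \otimes c_{z'}$ is a trivialization of $(\Omega'_\eta)^*(T_w^*M) \cong (\Omega'_\eta)^*M \otimes (\Omega'_\eta)^*L_{z'}$: the factor $c$ comes from the assumption on $c$, and $c_{z'}$ trivializes $(\Omega'_\eta)^*L_{z'}$ via the canonical identification $L_{z',\Omega'_\eta(u')} = E^t_{u',z'}$ recorded in Section~\ref{sec:unif}. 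Third, expanding
\[
c'(u'+v') \otimes c'(u')^{-1} \otimes c'(v')^{-1} \,=\, \bigl[c(u'+v') \otimes c(u')^{-1} \otimes c(v')^{-1}\bigr] \otimes \bigl[c_{z'}(u'+v') \otimes c_{z'}(u')^{-1} \otimes c_{z'}(v')^{-1}\bigr] \, ,
\]
the first bracket equals $t(u',\varPhi(v'))$ by assumption on $c$, while the second bracket is trivial under the canonical trivialization furnished by the theorem of the square applied to $L_{z'}$, because $c_{z'}$ is a group homomorphism. Hence the Riemann period relation (\ref{Riemann_period}) holds for $c'$ with the same $\varPhi$.

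Finally, I would check that this triple represents the rigidified line bundle $T_y^*L$. Since $p$ and $q$ are group homomorphisms, the identities $p \circ T_z = T_y \circ p$ and $q \circ T_z = T_w \circ q$ yield $p^*(T_y^*L) = T_z^*(q^*M) = q^*(T_w^*M)$, forcing the $B$-component of the triple for $T_y^*L$ to be $T_w^*M$. The main obstacle is pinning down the $Y$-descent data on the nose: one has to verify that tensoring $c$ by $c_{z'}$ is precisely the correction to the $Y$-linearization of $q^*M$ induced by translation by $z$. This reduces, via the definition $c_{z'}(u') = e_{u'}(z')$, to the compatibility of the Poincar\'e trivialization $t$ with translations on $E^\an$, which is already encoded in the Raynaud construction of Section~\ref{sec:raynaud}.
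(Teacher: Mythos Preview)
The paper itself does not prove this lemma; it simply cites \cite[Proposition~3.9]{frss}. Your proposal is therefore a genuine attempt to supply an argument the paper omits, and the first two parts are correct: the theorem of the square together with the commutativity of \eqref{morphism_ses} on generic fibers gives $T_w^*M \cong M \otimes L_{z'}$, and your verification of the Riemann period relation for $c'$ is clean, using that $c_{z'}$ is a homomorphism.

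Where your argument is incomplete is exactly the step you flag yourself. Showing $p^*(T_y^*L)\cong q^*(T_w^*M)$ pins down the $B$-component only up to tensoring by some $E_u$, and does not by itself identify the trivialization. The missing computation is this: if $f$ is a theta function for $L$ with functional equation \eqref{func_eqn_theta}, then $T_z^*f$ satisfies
\[
(T_z^*f)(\zeta) = (T_z^*f)(\zeta\cdot u') \otimes \bigl(c(u')\otimes e_{\varPhi(u')}(z)\bigr) \otimes e_{\varPhi(u')}(\zeta),
\]
so the new trivialization is $c(u')\otimes e_{\varPhi(u')}(z)$. One then has to identify $e_{\varPhi(u')}(z)$ with $c_{z'}(u')=e_{u'}(\lambda_{E,L}(z))$ under the canonical identification $E_{\varPhi(u'),z}\cong E^t_{u',z'}$ coming from the symmetry of the Poincar\'e bundle and the compatibility of $\lambda_{E,L}$ with $\varPhi$; this is the substance behind your phrase ``compatibility of the Poincar\'e trivialization $t$ with translations'', but it needs to be stated and checked, not just invoked. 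That identification is what \cite[Proposition~3.9]{frss} carries out.
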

\begin{proof} This is \cite[Proposition~3.9]{frss}.
\end{proof}

\subsection{Tropicalization of Riemann theta functions} \label{tropicalize_riemann_theta}

We assume in this section that $L$ defines a {\em principal polarization}.
It is shown in \cite{frss} that the tropicalization of a Riemann theta function associated to $L$ is a translate of the tropical Riemann theta function, up to an additive constant. In this section we review this result and discuss some of the details.

Let $f$ be a non-zero Riemann theta function for $L$.
 Let $(M,\varPhi,c)$ be a triple associated to $L$ such that the tropical cocycle of $f$ is given by $c_\trop(u') + \langle \varPhi(u'), \nu \rangle$.

\begin{prop} \label{translate_gives_symmetric} 
There exists an element $z_0 \in E^\alg$ such that the following four properties are satisfied.  Let $y=p(z_0)$.
\begin{itemize}
\item[(a)] Let $z' = \lambda_{E,L}(z_0)$, let $w = q(z_0)$ and write $c'=c \otimes c_{z'}$ and $M'=M \otimes L_{z'}$. The line bundle $M'$ is symmetric and, under the canonical identification of line bundles $M'^{\otimes 2} = (\id,\lambda_{B,L})^* P $ on $B$ derived from \eqref{canonical_identification}, we have $c'(u')^{\otimes 2} = t(u',\varPhi(u'))$ for $u' \in Y$, the identity $c'_\trop(u') = \frac{1}{2}[u',u']$ holds for $u' \in Y$ and $(M',\varPhi,c')$ is a triple for $T_y^*L$. 
\item[(b)] The line bundle $T_y^*L$ is symmetric.
\item[(c)] The function $T_{z_0}^*f$  is a theta function for the  line bundle $T_y^*L$.
\item[(d)]  The tropicalization $\overline{T_{z_0}^*f}$ of the theta function $T_{z_0}^*f$ is equal to the tropical Riemann theta function $\varPsi$, up to an additive constant. 
\end{itemize}
\end{prop}
\begin{proof} The existence of an element $z_0 \in E^\alg$ such that property (a) holds is guaranteed by combining Lemma~\ref{triple_of_translate} and \cite[Proposition~3.18]{frss}. Property (b) follows from the symmetry of $M'$ and property (c) is clear. 
As to property (d), we note that \cite[Theorem~4.9]{frss} states that the tropicalization $\overline{f}$ of $f$ is equal to a translate of the tropical Riemann theta function $\varPsi$, up to an additive constant. This means that for suitable $\kappa' \in X_\rr^*$, we have that $T_{\kappa'}^*\overline{f}$ is equal to $\varPsi$, up to an additive constant.  Let $\kappa=\trop(z_0)$. We claim  that $\kappa'=\kappa$. By Lemma \ref{c_z_prime_formula}, we have that $ c_{z',\trop}(u') = \langle \varPhi(u'),\kappa \rangle  = [u',\kappa]$ for $u' \in Y$. 
Since, for $c'=c \otimes c_{z'}$, we have $c'_\trop(u') = \frac{1}{2}[u',u']$ by (a), we deduce that $ c_\trop(u') = c'_\trop(u')  - c_{z',\trop}(u') = \frac{1}{2} [ u', u' ] - [u',\kappa]  $
for $u' \in Y$.  We see that $\overline{f}$ has tropical cocycle $\frac{1}{2} [ u', u' ] + [u',-\kappa+\nu]$. As is easily verified, this is the tropical cocycle of the translated tropical Riemann theta function $ T_{-\kappa}^*\varPsi$. Reasoning as in the proof of \cite[Theorem~4.9]{frss}, we may conclude that $\overline{f}= T_{-\kappa}^*\varPsi$, up to an additive constant. This gives that $\overline{T_{z_0}^*f} = T_\kappa^*\overline{f}$ is equal to $\varPsi$, up to an additive constant, and we see that property (d) holds. 
\end{proof}

\section{Canonical metrics and theta functions} \label{formula_log_norm}

Let $L$ be a rigidified symmetric ample line bundle on $A$ defining a principal polarization of $A$. In this section we make the following assumption: 
\begin{itemize}
\item[-] the rigidified line bundle $L$ has an associated triple $(M,\varPhi,c)$ with $M$ a symmetric rigidified ample line bundle on $B$ such that under the  identification of rigidified line bundles $M^{\otimes 2} = (\id,\lambda_{B,L})^* P $ on $B$ derived from \eqref{canonical_identification}, we have $c(u')^{\otimes 2} = t(u',\varPhi(u'))$ for $u' \in Y$.
\end{itemize}
Proposition~\ref{translate_gives_symmetric} shows that this assumption is verified upon replacing $L$ by a suitable translate.

\begin{prop} \label{Hindry_Werner} Let $s$ be a non-zero global section of $L^\an$ and let $f$ be a theta function of $L$ corresponding to $s$. Let $x \in A^\an$, choose $z \in E^\an$ such that $p(z)=x$ and assume that $s$ does not vanish at~$x$. The equality
\begin{equation} \label{eqn:HW}
-\log\|s(x)\|_L =  \frac{1}{2}[\trop(z),\trop(z)]  - \log \|f(z)\|_{q^*M}  
\end{equation} 
holds.
\end{prop}
\begin{proof} Our assumption implies that $c_\trop(u')= \frac{1}{2}[u',u']$ for $u' \in Y$. We have $\langle \varPhi(u'),\trop(z) \rangle = [\trop(z),u']$ for $z \in E^\an$ and $u' \in Y$. 
The functional equation \eqref{additive_cocycle} therefore translates into
\begin{equation} \label{additive_cocycle_bis}
 -\log \|f(z )\|_{q^*M} = -\log \|f(z \cdot u') \|_{q^*M} +  \frac{1}{2}[u',u'] +  [\trop(z),u']
\end{equation}
for $z \in E^\an$ and $u' \in Y$. It follows that the right-hand side of \eqref{eqn:HW} is independent of the choice of $z \in E^\an$ such that $p(z)=x$. As $A^\alg$ is dense in $A^\an$, it suffices by continuity to verify equality \eqref{eqn:HW} for $x \in A^\alg$. 

Write $D=\divisor_L s$. We saw in Section~\ref{sec:green_AV} that the restriction of $-\log\|s(x)\|_L$ to $A^\alg$ defines a N\'eron function for $D$. Also, a N\'eron function for $D$ is unique up to an additive constant. 

Let $\varLambda(x)$ denote the restriction of the right-hand side of \eqref{eqn:HW} to $A^\alg$.  The compatibility of the rigidifications of $p^*L$ and $q^*M$ at the origin along the isomorphism $p^*L \isom q^*M$ implies that the continuous function $ -\log \| s(p(z)) \|_L + \log \|f(z)\|_{q^*M}$ evaluates as zero when $z=0$. Hence, in order to prove \eqref{eqn:HW} for $x \in A^\alg$ it suffices to show that $\varLambda(x)$ is a N\'eron function for $D$.

First of all, we have that $\varLambda(x)$ defines a Weil function for  $D $ on $A^\alg \setminus \Supp(D)$. 
It is therefore left to show that there is a rational function $h$ on $A$ with $\divisor h = -[2]^*D + 4 D$ such that
\begin{equation} \label{to_be_shown}
 4 \, \varLambda(x) - \varLambda(2x) = - \log|h(x)| 
\end{equation}
for $x \in A^\alg$ away from the support of $-[2]^*D + 4 D$. We note that
\begin{equation} \label{first_equiv}
 4 \, \varLambda(x) - \varLambda(2x) = - 4\log \|f(z)\|_{q^*M} +  \log \|f(2z)\|_{q^*M} \, . 
\end{equation}

Let $\mm$ denote the unique cubical extension of the rigidified line bundle $M$ over the abelian scheme $\bb$. Then $\mm$ is symmetric and it follows that there is a unique isomorphism of rigidified line bundles $[2]^*\mm \isom \mm^{\otimes 4}$ on~$\bb$. The restriction of this isomorphism to $B^\an$ is by construction  an isometry for the model metric $\|\cdot\|_M$ associated to $\mm$ on $M^\an$ on $B^\an$. We also note that the model metric $\|\cdot\|_M$ equals the canonical metric on $M^\an$. 
Let $g(z) = f(z)^4 \cdot f(2z)^{-1}$, viewed as a meromorphic function on $E$. 
We conclude that
\begin{equation} \label{second_equiv} - 4\log \|f(z)\|_{q^*M} +  \log \|f(2z)\|_{q^*M} = - \log |g(z)| \, . 
\end{equation}
We claim that the function $g(z)$ is $Y$-invariant. Indeed, by the functional equation \eqref{func_eqn_theta} 
we have
\[ \begin{split}
f(z)^4 \cdot & f(2z)^{-1} = \\ 
& f(z+u')^4 \cdot c(u')^{\otimes 4} \cdot e_{\varPhi(u')}(z)^{\otimes 4} \cdot f(2z+2u')^{-1} \cdot
c(2u')^{\otimes -1}   \cdot e_{\varPhi(2u')}(2z)^{\otimes -1} \end{split}
\]
for $u' \in Y$ and $z \in E^\an$. As $e_u(z)$ is bilinear, we have $e_{\varPhi(u')}(z)^{\otimes 4} = e_{\varPhi(2u')}(2z)$ and the condition $c(u')^{\otimes 2} = t(u',\varPhi(u'))$ for $u' \in Y$ implies that $c(u')^{\otimes 4} = c(2u')$, up to canonical identifications. The claim follows.
As a result, we may identify $g$ with $p^*h$. Combining \eqref{first_equiv} and \eqref{second_equiv}, we obtain \eqref{to_be_shown}.
\end{proof}
\begin{remark} \label{alternative}
The formula in Proposition~\ref{Hindry_Werner} may be compared with the formulas for N\'eron functions associated to $L$ given in~\cite[Th\'eor\`eme~D]{hi}, and in~\cite[Th\'eor\`eme~C]{hi} and~\cite[Corollary 3.6]{we} in the situation that $A$ has toric reduction. 
\end{remark}

\section{Semistable models and canonical metric} \label{sec:cub_canonical} \label{sec:MB_models_can_metric}

Recall that we assume that $A$ has semistable reduction over $F$. Let $\nn$ denote the N\'eron model of $A$ over $R$.  Then $\nn$ is a semistable model of $A$ over $R$.   When $D$ is a prime divisor on $A$, we denote by $\overline{D}$ its Zariski closure on $\nn$. This is a prime divisor on $\nn$. We extend the assignment $D \mapsto \overline{D}$ by linearity to the set of all divisors on $A$. Following \cite[Section~11.5]{lang}, if $D$ is a divisor on $A$, we call the associated divisor $\overline{D}$ on $\nn$ the \emph{thickening} of $D$ on $\nn$.

 For each $w \in A(F)$, we write $\overline{w}$ for the section of $\nn$ corresponding to $w$.
Let $\varPhi_{\nn}$ denote the group of connected components of the special fiber of $\nn$.
Let $n \in \zz_{>0}$ be such that $n \cdot \varPhi_{\nn} =0$ and write $M=L^{\otimes 2n}$. By Lemma~\ref{MB-first}, the cubical line bundle $M$ admits a unique cubical extension $\mm$ over $\nn$.  

The next result is essentially a classical result due to N\'eron (see, for example, \cite[Section~11.5]{lang} or \cite[Section~III.1.3]{mb}).  
\begin{prop} \label{cubical_canonical_zero} Let $s$ be a non-zero rational section of $L$. Write $t=s^{\otimes 2n}$ and view $t$ as a rational section of the cubical line bundle $\mm$ over $\nn$. Write $D = \divisor_L s$ and let $\overline{D}$ denote the thickening of $D$ in  $\nn$. Let $x \in A(F)$ and let $\overline{x}$ denote the closure of $x$ in $\nn$. Let $\xi \in \varPhi_{\nn}$ be such that $x$ specializes to $\xi$. Assume that $x \notin \Supp(D)$. Then the equality 
\begin{equation} \label{well-known} -2n  \log \|s(x)\|_L =  2n \,(\overline{x} \cdot \overline{D}) +  \ord_{\xi,\mm}(t)  
\end{equation}
holds. Here $(\overline{x} \cdot \overline{D})$ denotes the intersection multiplicity of the $1$-cycle $\overline{x}$ with the divisor $\overline{D}$ on the regular scheme $\nn$. 
\end{prop}
\begin{proof}  Write $E =\divisor_\mm(t)$, viewed as a Cartier divisor on $\nn$.  Following \cite[Sections~III.1.2--4]{mb}, we consider the map $\langle t,\cdot\rangle \colon A(F) \setminus \Supp(D) \to \rr$ given by setting $\langle t,w \rangle = (\overline{w} \cdot E)$, where $(\overline{w} \cdot E)$ denotes the intersection multiplicity of the $1$-cycle $\overline{w}$ with the divisor $E$ on $\nn$. The discussion in \cite[Sections~III.1.2--4]{mb} shows that $\langle t,\cdot \rangle$ extends uniquely as a Weil function on $A^\alg$ with respect to the divisor $2n D$. We claim that $\langle t,w \rangle$ is actually a N\'eron function with respect to $2n D$. Let $h$ as in Section~\ref{sec:green_AV} be the rational function on $A$ corresponding to the rational section 
$s^{\otimes 4}  \otimes [2]^*s^{\otimes -1}$ of the rigidified trivial line bundle $L^{\otimes 4} \otimes [2]^*L^{\otimes -1}$. Let $w \in A(F)$ and assume that $w$ is not contained in the support of $\divisor h$. Observe that  $[2] \colon A \to A$ extends as the multiplication-by-two map of the commutative group scheme $\nn$. We compute
\[ \begin{split} 4\langle t, w \rangle - \langle t,2w \rangle & = 4(\overline{w} \cdot E) - (\overline{[2](w)} \cdot E ) \\
& =4(\overline{w} \cdot E) - (([2] \circ \overline{w}) \cdot E ) \\
 & = 4(\overline{w} \cdot E) - (\overline{w} \cdot [2]^*E) \\
 & = \left(\overline{w} \cdot (4E - [2]^*E) \right) \, . 
\end{split} \]
Now, as $\mm$ is a cubical extension of $L^{\otimes 2n}$, we have that the $(2n)$-th tensor power of the isomorphism $\varphi \colon L^{\otimes 4} \isom [2]^*L$ extends into an isomorphism of line bundles $\mm^{\otimes 4} \isom [2]^* \mm$ over $\nn$. It follows that $4E - [2]^*E$ coincides with $2n \divisor h$,  where $h$ is now viewed as a rational function on the integral scheme $\nn$. We find that
\[ (\overline{w} \cdot (4E - [2]^*E))  = 2n \,(\overline{w} \cdot \divisor h ) \, , \]
and, since $ (\overline{w} \cdot \divisor h)  = -\log |h(w)| $,
we conclude that 
\[ 4\langle t, w \rangle - \langle t,2w \rangle = -2n \log |h(w)| \, . \]
This shows that $\langle t,w \rangle$ is a N\'eron function with respect to the divisor $2n  D$. Comparison with (\ref{mult-by-2}) and the uniqueness of N\'eron functions up to additive constants yields that  $\langle t,\cdot \rangle = -2n \log \|s\|_L$ as functions on $A^\alg \setminus \Supp(D)$. Explicitly we have $E= 2n \overline{D} + \sum_{\zeta \in \varPhi_{\nn}} \ord_{\zeta,\mm}(t) \cdot \overline{\zeta}$. For $x \in A(F)$ and $\xi \in \varPhi_\nn$ as in the proposition this gives $\langle t,x \rangle = (\overline{x} \cdot E) = 2n \,(\overline{x} \cdot \overline{D}) +  \ord_{\xi,\mm}(t)$. The equality in (\ref{well-known}) follows. 
\end{proof}

Now let $\aa$ be \emph{any} semistable model of $A$ over $R$. 
Let $\nn$ be the N\'eron model of $A$. As both $\aa$ and $\nn$ are semistable models of $A$ over $R$, the natural map $\aa \to \nn$ given by the N\'eron mapping property is an open immersion.

Assume that $L$ has a cubical extension $\ll$ over $\aa$.  Let $s$ be a non-zero rational section of $L$ and view $s$ as a rational section of the cubical line bundle $\ll$ over $\aa$. We have the following variant of Proposition~\ref{cubical_canonical_zero}. Note that we similarly have a notion of thickening of divisors on $A$ in $\aa$.
\begin{prop}  \label{cubical_canonical_zero_MB} 
Write $D = \divisor_L s$ and let $\overline{D}$ denote the thickening of $D$ in  $\aa$. Let $\xi \in \varPhi_\aa$ and let  $x \in A(F)$ be such that its Zariski closure $\overline{x}$ in $\aa$ intersects the irreducible component corresponding to $\xi$. Assume that $x \notin \Supp(D)$. Then the equality 
\begin{equation} \label{well-known-bis}
-  \log \|s(x)\|_L =  (\overline{x} \cdot \overline{D}) +  \ord_{\xi,\ll}(s)  
\end{equation}
holds. Here $(\overline{x} \cdot \overline{D})$ denotes the intersection multiplicity of the $1$-cycle $\overline{x}$ with the divisor $\overline{D}$ on the regular scheme $\aa$. 
\end{prop}
\begin{proof}  Let $n \in \zz_{>0}$ be such that $n \cdot \varPhi_{\nn} =0$ and  let $\mm$ denote the cubical extension of the line bundle $M=L^{\otimes 2n}$ over $\nn$ whose existence is guaranteed by Lemma~\ref{MB-first}. Then $\mm|_\aa = \ll^{\otimes 2n}$ by uniqueness of cubical extensions. Noting that $\ord_{\xi,\mm}(s^{\otimes 2n}) = 2n \ord_{\xi,\ll}(s)$ we find the required equality by applying Proposition~\ref{cubical_canonical_zero}.
\end{proof}

\subsection{Evaluations at Shilov points} \label{eval_Shilov}
Let $A^\an$ denote the Berkovich analytification of $A$. Let $\mathcal{V}$ be a projective integral model of $A$ containing $\nn$ and hence $\aa$ as an open subscheme. Let $\xi \in \varPhi_\aa$.  It follows from what we have said at the end of Section~\ref{tropicalization} that the Shilov point $x_\xi$ of $A^\an$ determined by the generic point of $\xi$ in the  special fiber $\mathcal{V}_{\widetilde{\mathbb{F}}}$ is an element of the canonical skeleton $\varSigma$ of $A^\an$.

 Let $\tau \colon A^\an \to \varSigma $ be the reduction map onto $\varSigma$.
\begin{prop} \label{norm_at_gauss}
The equalities
\begin{equation}
- \log \|s(x_\xi)\|_L = \ord_{\xi,\ll}(s)   = \inf_{x \in A^\an \, : \, \tau(x) = x_\xi} (-\log \|s(x)\|_L)
\end{equation}
hold.
\end{prop}
\begin{proof}  Write $D = \divisor_L s$ and let $\overline{D}$ denote the thickening of $D$ in  $\aa$. Let $x$ be any closed point of $A$ with $x \notin \Supp(D)$ such that its Zariski closure $\overline{x}$ in $\aa$ intersects the irreducible component $X$ corresponding to $\xi$. From Proposition~\ref{cubical_canonical_zero_MB}, we readily obtain the equality 
\begin{equation} \label{well-known-bis-bis}
-  \log \|s(x)\|_L =  \frac{1}{\deg (x)} (\overline{x} \cdot \overline{D}) +  \ord_{\xi,\ll}(s)  \, . 
\end{equation}
This can be seen by taking a finite extension $F'$ of $F$ such that $x$ extends as a section over the  valuation ring of $F'$.

Denote by $\varOmega$ (resp.\ $\varOmega'$) the inverse image of $X$ (resp.\ $X \setminus \Supp(\overline{D})$) under the reduction map $\mathrm{red}_{\mathcal{V}} \colon A^\an \to \mathcal{V}_{\widetilde{\mathbb{F}}}$. We note that $X \setminus \Supp(\overline{D}) \subset X \subset  \aa_{\widetilde{\mathbb{F}}} \subset  \mathcal{V}_{\widetilde{\mathbb{F}}}$ are open immersions, that $\varOmega$ (resp.\ $\varOmega'$) is the Raynaud generic fiber of the formal completion of $\aa$ along $X$ (resp.\ $X \setminus \Supp(\overline{D})$), that both $\varOmega$ and $\varOmega'$ contain the Shilov point $x_\xi$ corresponding to $X$ and that $\varOmega$ is exactly the set of $x \in A^\an$ such that $\tau(x) = x_\xi$. 

Denote by $\varOmega_0$ (resp. $\varOmega'_0$) the set of algebraic points of $\varOmega$ (resp.\ $\varOmega'$). Then $\varOmega_0$ (resp.\ $\varOmega'_0$) is dense in $\varOmega$ (resp. $\varOmega'$). Equation (\ref{well-known-bis-bis}) gives
\[ \inf_{x \in \varOmega_0} (-\log\|s(x)\|_L) \geq \ord_{\xi,\ll}(s) \, . \]
As $\varOmega_0$ is dense in $\varOmega$, this implies that
\begin{equation} \label{pre-intermediate}
 \inf_{x \in \varOmega} (-\log\|s(x)\|_L) \geq \ord_{\xi,\ll}(s) \, . 
\end{equation}
Equation (\ref{well-known-bis-bis}) also gives
\begin{equation} \label{intermediate}
 -\log \|s(x)\|_L = \ord_{\xi,\ll}(s) \, , \quad x \in \varOmega_0' \, . 
\end{equation}
On the one hand the combination of (\ref{pre-intermediate}) and (\ref{intermediate}) gives
\[  \inf_{x \in A^\an \, : \, \tau(x) = x_\xi} (-\log \|s(x)\|_L) = \ord_{\xi,\ll}(s) \, . \]
On the other hand, as $\varOmega'_0$ is dense in $\varOmega'$, and $\varOmega'$ contains $x_\xi$, equation (\ref{intermediate}) implies by continuity that 
\[ -\log \|s(x_\xi)\|_L = \ord_{\xi,\ll}(s) \, . \]
This proves the proposition.
\end{proof}
\begin{cor} For all $y \in \varSigma$ one has the equality
\begin{equation} \label{norm_equals_sup}
  \|s(y)\|_L  =  \sup_{x \in A^\an \, : \, \tau(x) = y} \|s(x) \|_L \, . 
\end{equation}  
In particular,  the equality
\begin{equation} \label{two_sups_equal}
  \sup_{x \in \varSigma} \|s(x)\|_L = \sup_{x \in A^\an} \|s(x)\|_L  
\end{equation}
holds.
\end{cor}
\begin{proof} For each torsion point $x \in \varSigma$, there exist a finite extension $F'$ of $F$ and a semistable model $\aa'$ of $A$ over the valuation ring of $F'$ such that $x=x_{\xi'}$ for some $\xi' \in \varPhi_{\aa'}$. By Lemma~\ref{MB-second}, upon a further base change, we may assume that $\aa'$ admits a cubical extension $\ll'$ of $L$. Proposition~\ref{norm_at_gauss} therefore gives
\[  \|s(y)\|_L  =  \sup_{x \in A^\an \, : \, \tau(x) = y} \|s(x) \|_L \]
for all $y \in \varSigma_{\mathrm{tor}}$. We obtain \eqref{norm_equals_sup} by density of $ \varSigma_{\mathrm{tor}}$ and by continuity. Equation \eqref{two_sups_equal} is immediate from \eqref{norm_equals_sup}.
\end{proof}
Assume that $L$ defines a principal polarization $\lambda \colon A \isom A^t$. Consider the invariant $I(A,\lambda)$ as in \eqref{def_nonarch_lambda_bis}.
\begin{cor} \label{reduce_to_skel} 
The formula
\[ I(A,\lambda) = \sup_{x \in \varSigma} \log \|s(x)\|_{L} -\int_{\varSigma} \log \|s\|_{L} \, \d \, \mu_{H}  \]
holds. In particular, $I(A,\lambda)$ vanishes if $A$ has good reduction.
\end{cor}

\subsection{Direct images} \label{dir_im_MB_model}
Let $\pi \colon \aa \to S=\Spec R$ denote the structure morphism of the semistable model $\aa$.
The sheaf $\pi_*\ll$ is coherent, by \cite[Lemme~VI.1.4.2]{mb}, and torsion-free and, hence, locally free. We assume from now on that $\pi_*\ll$ is in fact a \emph{line bundle} on $S$, that is, the generic fiber $L$ of $\ll$ determines a \emph{principal polarization} of $A$. We may view $s$ also as a non-zero rational section of the sheaf $\pi_*\ll$.

\begin{prop} \label{order_is_minimum}  Assume that $\pi_*\ll$ is a line bundle. Let $v$ denote the closed point of $S$. Then $\ord_v (s) = \min_{\xi \in \varPhi_\aa} \ord_{\xi,\ll} (s)$.
\end{prop}
\begin{proof} Let $a=\ord_v (s) $ and set $s'=\varpi^{-a}s$. Let $M=H^0(S,\pi_*\ll)$. By assumption $M$ is a free rank-one $R$-module and $s'$ is a generator of $M$. Let $\xi \in \varPhi_\aa$. The $R$-module $M$ is canonically identified with the $R$-module $H^0(\aa,\ll)$ and this gives $\ord_\xi(s') \geq 0$. Since $\ord_\xi(s')=-a+\ord_\xi(s)$, we conclude that $\ord_\xi(s) \geq a$. We are left to prove that there exists a $\xi \in \varPhi_\aa$ such that $\ord_\xi(s)=a$. Suppose that for all $\xi \in \varPhi_\aa$, we have $\ord_\xi(s)>a$. Then $\ord_\xi(\varpi^{-1}s') \geq 0$ for all $\xi$ and hence $\varpi^{-1}s' \in H^0(\aa,\ll)=M$. It follows that $M = \mathfrak{m}_R \cdot M$. This contradicts Nakayama's lemma.
\end{proof}
Define $ \|s\|_{\pi_* \ll}$ via the formula $\ord_v (s) = -\log \|s\|_{\pi_* \ll}$.
\begin{cor} \label{crucial_step}
  The equality
\[ \|s\|_{\pi_*\ll} = \sup_{x \in \varSigma} \|s(x)\|_L \]
holds.
\end{cor}
\begin{proof} 
By Proposition~\ref{order_is_minimum}, we have 
\[ \ord_v (s) = \min_{\xi \in \varPhi_\aa} \ord_{\xi,\ll} (s) \, . \]
By Proposition~\ref{norm_at_gauss}, this can be rewritten as
\begin{equation} \label{relate_to_max}
   \|s\|_{\pi_* \ll} = \max_{\xi \in \varPhi_\aa} \|s(x_\xi)\|_L \, . 
\end{equation}
As the left-hand side is independent of the choice of semistable model of $A$ with cubical extension of $L$, we find that for any semistable model $\aa'$ of $A$ with cubical extension $\ll'$ of $L$ over the valuation ring of any finite extension $F'$  of $F$ that
 \begin{equation} \label{relate_to_max_II}
   \|s\|_{\pi_* \ll} = \max_{{\xi'} \in \varPhi_{\aa'}} \|s(x_{\xi'})\|_L \, . 
\end{equation}
For each torsion point $x \in \varSigma$, there exist a finite extension $F'$ of $F$ and a semistable model $\aa'$ of $A$ over the valuation ring of $F'$ such that $x=x_{\xi'}$ for some $\xi' \in \varPhi_{\aa'}$. By Lemma~\ref{MB-second}, upon a further base change we may assume that $\aa'$ admits a cubical extension $\ll'$ of $L$.  We thus actually find that
 \begin{equation} \label{relate_to_max_III}
   \|s\|_{\pi_* \ll} = \sup_{x \in \varSigma_{\mathrm{tor}}} \|s(x)\|_L \, .
\end{equation}
By continuity and the density of $\varSigma_{\mathrm{tor}}$ in $\varSigma$ we deduce that
 \begin{equation} \label{relate_to_sup_III}
   \|s\|_{\pi_* \ll} = \sup_{x \in \varSigma} \|s(x)\|_L \, .
\end{equation}
This finishes the proof of the corollary.
\end{proof}

\section{Proof of Theorem~\ref{equals_trop_moment}} \label{proof_ThmB}

We continue to assume that the rigidified symmetric ample line bundle $L$ defines a principal polarization $\lambda$ on $A$. Let $(X,Y,\varPhi,b)$ be the  principally polarized tropical abelian variety associated to $(A,L)$ by Raynaud's uniformization theory and let  $\varSigma = X_\rr^*/Y$ be the  associated polarized real torus as at the end of Section~\ref{sec:raynaud}. Let $[\cdot,\cdot]$ denote the induced inner product on $X_\rr^*$. 

We recall from Section~\ref{sec:trop_Riemann_theta} that the tropical Riemann theta function associated to $(A,L)$ is the function $\varPsi \colon X^*_\rr \to \rr$ given by
\begin{equation}  
  \varPsi(\nu)   =  \min_{u' \in Y} \left\{ \frac{1}{2} [u',u'] + [u',\nu] \right \}  
\end{equation}
for $\nu \in X^*_\rr$.
Let  $\|\cdot\|$ denote the  norm associated to $[\cdot,\cdot]$ on $X_\rr^*$. 
 The \emph{modified tropical Riemann theta function} is defined to be the function
\begin{equation} \label{norm_theta} \|\varPsi\|(\nu) = \varPsi(\nu) + \frac{1}{2} \|\nu\|^2  \, , \quad \nu \in X_\rr^* \, . 
\end{equation}
The function $\|\varPsi\|$ is $Y$-invariant and hence descends to $\varSigma$. Explicitly we have
\begin{equation} \label{explicit_norm_Psi} \|\varPsi\|(\nu) = \frac{1}{2} \min_{u' \in Y}  \, \| \nu+u' \|^2 
\end{equation}
for all $\nu \in X^*_\rr$.

We have the following result about the invariant $I(A,\lambda)$ as defined in \eqref{def_nonarch_lambda_bis}.
\begin{thm} \label{equals_integral_normalized_tropical} 
The formula
\[ I(A,\lambda) = \int_\varSigma \| \varPsi \| \, \d \, \mu_H \]
holds. Here $\mu_H$ is the Haar measure of unit volume on the real torus $\varSigma$.
\end{thm}
It is not hard to see that the integral $\int_\varSigma \| \varPsi \| \, \d \, \mu_H$ equals half the tropical  moment of $\varSigma$. Thus, Theorem~\ref{equals_integral_normalized_tropical} implies part (a) of
Theorem~\ref{equals_trop_moment}.

\begin{proof}[Proof of Theorem~\ref{equals_integral_normalized_tropical}]
Let  $s$ be a non-zero global section of $L$ and let $f$ be a theta function corresponding to $s$. By Proposition~\ref{translate_gives_symmetric}, there exists an element $z_0 \in E^\alg$ such that the following four properties hold.  Let $y=p(z_0)$ and put $L'=T_y^*L$. 
\begin{itemize}
\item[(a)] The line bundle $L'$ has an associated triple $(M',\varPhi,c')$ with $M'$ a symmetric rigidified  line bundle that as a line bundle is isomorphic with $T_w^*M$, where $w=q(z_0)$, such that under the canonical identification of line bundles $M'^{\otimes 2} = (\id,\lambda_{B,L})^* P $ on $B$ derived from \eqref{canonical_identification} and the symmetry of $M'$, we have $c'(u')^{\otimes 2} = t(u',\varPhi(u'))$ for $u' \in Y$. 
\item[(b)] The line bundle $L'$ is symmetric.
\item[(c)] The function $T_{z_0}^*f$  is a theta function for the symmetric line bundle $L'$ corresponding to the section $T_y^*s$. 
\item[(d)] The tropicalization $\overline{T_{z_0}^*f}$ of the theta function $T_{z_0}^*f$  is equal to the tropical Riemann theta function $\varPsi$, up to an additive constant. 
\end{itemize}
By the invariance property expressed in Lemma~\ref{indep_of_choice_nonarch}, 
in order to compute $I(A,\lambda)$ we may replace the line bundle $L$ by the line bundle $L'=T_y^*L$. We may therefore assume that the following is verified for $L$ and $f$: there exists a triple $(M,\varPhi,c)$ for $L$ such that:
\begin{itemize}
\item[(i)] under the  identification of rigidified line bundles $M^{\otimes 2} = (\id,\lambda_{B,L})^* P $ on $B$ derived from \eqref{canonical_identification} we have $c(u')^{\otimes 2} = t(u',\varPhi(u'))$ for $u' \in Y$;
\item[(ii)] the tropicalization $\overline{f}$ is equal to the tropical Riemann theta function $\varPsi$ up to an additive constant. 
\end{itemize}
Write $\overline{f} = \varPsi + \gamma$ where $\gamma \in \rr$.
Let $x \in \varSigma$ and let $z \in X_\rr^*$ be such that $p(z)=x$. 
By condition (i) we may apply Proposition~\ref{Hindry_Werner} and conclude that
\begin{equation} \begin{split}
 -\log \|s(x)\|_L & = \frac{1}{2}[z,z]  + \overline{f}(z) \\
 & = \frac{1}{2}[z,z] + \varPsi(z) + \gamma \\
 & = \| \varPsi  \|(x) + \gamma  \, . 
\end{split}
\end{equation}
As $\inf_{y \in \varSigma} \| \varPsi \|(y)= \|\varPsi\|(0) = 0$, we deduce that
\begin{equation}
-\log \|s(x)\|_L = \| \varPsi  \|(x) - \log \sup_{y \in \varSigma} \|s(y)\|_L \, . 
\end{equation}
Using Corollary~\ref{reduce_to_skel}, we find that
\begin{equation}
\begin{split} I(A,\lambda) & = \log \sup_{y \in \varSigma} \|s(y)\|_L - \int_{\varSigma} \log \|s\|_L \, \d \, \mu_H \\ 
&=  \int_{\varSigma} \| \varPsi \| \, \d \, \mu_H \, ,
\end{split}
\end{equation}
which proves the theorem.
\end{proof}

\section{Complex abelian varieties} \label{sec:archimedean}

The purpose of this section is to discuss several intrinsic hermitian metrics associated to line bundles on complex abelian varieties. Let $A$ be an abelian variety over $\cc$ of dimension $g$.

\subsection{Faltings metric and $L^2$-metric} \label{sec:complex}
 Let $\alpha $ be an element of the fiber $0^*\varOmega_{A/\cc}^g$ of the canonical line bundle $\varOmega_{A/\cc}^g$ at the origin. Evaluation at the origin gives an isomorphism $H^0(A,\varOmega^g_{A/\cc}) \isom 0^*\varOmega_{A/\cc}^g $ of $\cc$-vector spaces and this allows us to view $\alpha$ as an element of $H^0(A,\varOmega^g_{A/\cc})$. We define the \emph{Faltings norm} of $\alpha$ by the equation
\begin{equation} \label{faltings_metric}  \|\alpha\|_{\mathrm{Fa}}^2 = \frac{ \sqrt{-1}^{g^2} }{2^g} \int_{A(\cc)} \alpha \wedge \overline{\alpha} \, . 
\end{equation}
\begin{remark} \label{warning} We warn the reader that in the literature there appear several variants of the normalization factor $\sqrt{-1}^{g^2} 2^{-g}$ in front of the integral in \eqref{faltings_metric}. Our normalization is compatible with the original source \cite{famordell}, and with \cite{aut}, but unfortunately not with \cite{bost_duke}, where the normalization factor is $\sqrt{-1}^{g^2} (2\pi)^{-g}$.
\end{remark}

Let $L$ be an ample line bundle on $A$. Let $\|\cdot\|$ be a smooth hermitian metric on $L^\an$. The associated $L^2$-metric on $H^0(A^\an,L^\an)$ is  defined as follows: let $s$ be a global section of $L^\an$.  We put
\begin{equation} \label{define_L2}  
\|s\|_{L^2}^2 =  \int_{A^\an} \| s \|^2 \, \d \, \mu_H \, , 
\end{equation}
where $\mu_H$ denotes the Haar measure on $A^\an$, normalized to give $A^\an$ total mass equal to one. 

\subsection{Canonical metrics} \label{canonical_arch}
Assume now that $L$ is ample and rigidified. By \cite[Section~II.2]{mb}, there exists a unique smooth hermitian metric $\|\cdot\|_L$  on $L^\an$ such that  the canonical cubical structure on $L$ provided by the theorem of the cube is an isometry. The metric $\|\cdot\|_L$ is called the \emph{canonical} metric on $L^\an$. Equivalently, the metric $\|\cdot\|_L$ can be characterized as the unique smooth hermitian metric on $L^\an$ such that \emph{(a)} the given rigidification at the origin is an isometry and \emph{(b)} the curvature form of $\|\cdot\|_L$  is translation-invariant. See \cite[Proposition~II.2.1]{mb}.

As can be verified immediately, if the given rigidification of $L$ is multiplied by a scalar $\lambda \in \cc^\times$ then the canonical metric on $L$ associated to the new rigidification is obtained by multiplying $\|\cdot\|_L$ by $|\lambda|$. Further, let $s$ be a non-zero rational section of $L$ and write $D=\divisor_L s$. We note that the map $-\log\|s\|_L \colon A^\an \setminus \Supp(D^\an) \to \rr$ is a \emph{N\'eron function} on $A$ with respect to $D$ in the sense of \cite[Section~11.1]{lang}. 

Assume now that $L$ is moreover {\em symmetric}. We have a unique isomorphism $[-1]^*L \isom L$ of rigidified line bundles and, further, by applying the theorem of the cube, a unique isomorphism of rigidified line bundles $\varphi_2 \colon [2]^*L \isom L^{\otimes 4}$.  The fact that the canonical cubical structure on $L$ is an isometry for $\|\cdot\|_L$ implies that $\varphi_2$ is an isometry for the metrics induced by $\|\cdot\|_L$  on $[2]^*L$ and $L^{\otimes 4}$.

\subsection{Translation by a two-torsion point} \label{sec:translation_canonical_arch} 
We continue to assume that $L$ is symmetric, rigidified and ample. Let $y \in A[2]$ be a two-torsion point and write $T_y \colon A \to A$ for translation along $y$. We have that $T_y^*L$ is a symmetric ample line bundle. 

\begin{lem} \label{canonical_translate_arch} Let $\|\cdot\|_L$ be the canonical metric on $L^\an$. Then the pullback metric $T_y^* \|\cdot\|_L$ is a 
  canonical metric on $T_y^*L^\an$. 
\end{lem}
\begin{proof} This can be shown exactly analogously to Lemma~\ref{canonical_translate_nonarch}. Alternatively, it is clear that $T_y^* \|\cdot\|_L$ is a smooth hermitian metric on $T_y^*L$ whose curvature form is translation-invariant. This also proves the lemma. 
\end{proof}

\subsection{The invariant $I(A,\lambda)$} \label{sec:I-inv}
Let $\lambda \colon A \isom A^t$ be a principal polarization of $A$ and let $L$ be any symmetric ample line bundle on $A$ determining $\lambda$. Let $s$ be a non-zero global section of $L$ and fix a rigidification of $L$. Let $\|\cdot\|_L$ denote the associated canonical metric on $L^\an$. 

We define
\begin{equation} \label{I-inv}
 I(A,\lambda) = \log   \|s\|_{L^2} - \int_{A^\an} \log \|s \|_L \, \d \, \mu_H   \, , 
\end{equation}
where $\mu_H$ denotes the Haar measure on $A^\an$, normalized to give $A^\an$ total mass equal to one. 

\begin{lem} The quantity $I(A,\lambda)$
is independent of the choice of line bundle $L$, of section $s$ and of rigidification of $L$ and hence defines an invariant of the principally polarized complex abelian variety $(A,\lambda)$.
\end{lem}
\begin{proof}
Choose one symmetric ample line bundle $L$ on $A$ determining $\lambda$. A change of rigidification results in a replacement of $\|\cdot\|_L$ by a scalar multiple of $\|\cdot\|_L$. Moreover, the space $H^0(A,L)$ of global sections of $L$ is one-dimensional. It follows immediately that the quantity $I(A,\lambda)$ as defined in (\ref{I-inv}) is independent of the choice of rigidification of $L$ and of section $s$. Now any other symmetric ample line bundle on $A$ determining $\lambda$ is given by $T_y^*L$ for some two-torsion point $y$ of $A$. By Lemma~\ref{canonical_translate_arch} and the translation-invariance of $\mu_H$, we find that the quantity $I(A,\lambda)$ is also independent of the choice of $L$. 
\end{proof}

\section{Stable Faltings height and key formula} \label{sec:key}

The purpose of this section is to review the definition of the stable Faltings height of an abelian variety defined over the field $\overline{\qq}$ of algebraic numbers and to state Bost's formula \cite{bost_duke} for it. We start by recalling the general notion of Arakelov degree.

\subsection{Arakelov degree} \label{sec:Arak_degree}
Let $k$ be a number field. Let $S$ denote the spectrum of the ring of integers $\oo_k$ of $k$. Let $\vv \to S$ be a flat morphism of finite type with smooth generic fiber. A {\em hermitian line bundle} on $\vv$ is the data of a line bundle $\ll$ on $\vv$ together with smooth hermitian metrics on the $\ll_v$ for $v \in M(k)_\infty$. A hermitian line bundle on $S$ can be identified with a projective rank-one $\oo_k$-module $\mm$ together with hermitian metrics $\|\cdot\|_v$ on the $\bar{k}_v$-vector spaces $\mm_v$ for all $v \in M(k)_\infty$. Let $\overline{\mm}= (\mm, (\|\cdot\|_v)_{v \in M(k)_\infty})$ be a hermitian line bundle on $S$. Its \emph{Arakelov degree} is given by choosing a non-zero element $s$ of $\mm$ and by setting
\begin{equation} \label{def_Ar_deg}
 \widehat{\deg} \,\, \overline{\mm} = \sum_{v \in M(k)_0} \ord_v (s) \log Nv - \sum_{v \in M(k)_\infty} \log \|s\|_v \, . 
\end{equation}
Here $Nv$ denotes the cardinality of the residue field at $v$. The Arakelov degree $\widehat{\deg} \,\, \overline{\mm}$  is independent of the choice of the section $s$, by the product formula. 
 
 \subsection{Stable Faltings height}
Let $A$ be an abelian variety of dimension $g$ over the number field $k$. Let $G$ be the identity component of the N\'eron model of $A$ over $S=\Spec \oo_k$, let $e \colon S \to G$ denote the zero section of $G$ and set $\omega_{G/S}= e^* \varOmega^g_{G/S}$.  Then $\omega_{G/S}$ is a line bundle on $S$. We endow $\omega_{G/S}$ canonically with the structure of a hermitian line bundle $\overline{\omega_{G/S}}$ on $S$ by using the metrics \eqref{faltings_metric} on all $v \in M(k)_\infty$.  The {\em Faltings height} $\h_F(A)$ of $A$ over $k$ is given by the Arakelov degree
\begin{equation} \label{faltings_height}  [k:\qq]\,\h_F(A) = \widehat{\deg} \,\, \overline{\omega_{G/S}} \, . 
\end{equation}
Finally, let $A$ be an abelian variety over $\qbar$ and let $k \subset \qbar$ be a number field such that $A$ has semistable reduction over $k$.  We let the {\em stable Faltings height} of $A$ be the Faltings height  of $A$ over $k$ given by \eqref{faltings_height}. The stable Faltings height is independent of the choice of the number field $k \subset \qbar$.

\subsection{Moret-Bailly models} \label{sec:mbmodels}
We make a slight variation upon \cite[Section~4.3]{bost_duke}. 
Let $S$ be a connected Dedekind scheme and let $\pi \colon \aa \to S$ be a semistable group scheme. 
Let $\ll$ be a line bundle on $\aa$.
Let $A$ be the generic fiber of $\aa$ and $L$ the generic fiber of $\ll$. Assume that $A$ is an abelian variety and assume that $L$ is a rigidified symmetric and ample line bundle on $A$. We denote by $K(L^{\otimes 2})$ the kernel of the polarization $\lambda_{L^{\otimes 2}} \colon A \to A^t$ associated to $L^{\otimes 2}$. We call $(\aa,\ll)$ a \emph{Moret-Bailly model} of $(A,L)$ if the line bundle $\ll$ is a cubical extension of~$L$ and the group scheme $K(L^{\otimes 2})$ extends as a finite flat subgroup scheme of $\aa$ over~$S$.
Denote by  $F$ the function field of $S$.  Starting from the N\'eron model of $A$ over $S$ and using Lemma~\ref{MB-second}, one may readily construct Moret-Bailly models, at the cost of replacing $F$ by a suitable finite field extension. See, for instance, \cite[Theorem~4.10(i)]{bost_duke} and its proof. 

\subsection{Bost's formula for the stable Faltings height} \label{sec:Bost_formula}
The aim of this section is to  display Bost's formula  \cite[Theorem~4.10(v)]{bost_duke} for the stable Faltings height of a principally polarized abelian variety, based on Moret-Bailly's key formula. We refer to  \cite[Chapitre VIII]{mb} for the original key formula. We restrict ourselves to the principally polarized case but mention that the results of \cite{bost_duke} pertain to polarizations of arbitrary degree.

Let $A$ be an abelian variety with semistable reduction over the number field~$k$. Let $L$ be a rigidified symmetric ample line bundle on $A$. Let $S$ be the spectrum of the ring of integers of $k$ and let $\pi \colon \aa \to S$ be a semistable group scheme with generic fiber $A$ and equipped with a line bundle $\ll$ extending the line bundle $L$ on $A$. We note that $\ll$ is canonically endowed with a structure of a hermitian line bundle $\overline{\ll}$, by taking the canonical metrics $\|\cdot\|_{L,v}$ from Section~\ref{sec:complex} on $\ll_v$ at all $v \in M(k)_\infty$. 

Assume that $(\aa,\ll)$ is a Moret-Bailly model of $(A,L)$ as in Section \ref{sec:mbmodels} and assume further that $L$ defines a {\em principal} polarization. Then the sheaf $\pi_* \ll$ is invertible \cite[Chapitre VI]{mb}. Further, the line bundle $\pi_*\ll$ on $S$ has a canonical structure of a hermitian line bundle $\overline{\pi_*\ll}$, as follows: for each $v \in M(k)_\infty$, one chooses the $L^2$-metric \eqref{define_L2} derived from the canonical metric $\|\cdot\|_{L,v}$. 
Write $\kappa_0 = \log(\pi \sqrt{2})$ as before. The following is a special case of Bost's formula from  \cite[Theorem~4.10(v)]{bost_duke}.
\begin{thm} \label{cle} Let $A$ be an abelian variety over $k$ and let $L$ be a symmetric rigidified ample line bundle on $A$. Assume that $L$ determines a principal polarization of $A$. Assume that $(A,L)$ extends into a Moret-Bailly model $(\aa,\ll)$ over $S$. Then  the equality
\[  [k:\qq]\, \h_F(A) = -2 \, \widehat{\deg} \, \, \overline{\pi_*\ll} - [k:\qq]\,\kappa_0 \,g \]
holds in $\rr$. Here $g$ denotes the dimension of $A$. 
\end{thm}
\begin{remark} The formula in Theorem \ref{cle} is slightly different from the one in \cite[Theorem~4.10(v)]{bost_duke}. This is due to a difference in normalization of the Faltings metric \eqref{faltings_metric}; see Remark \ref{warning}. 
\end{remark}

\section{N\'eron--Tate heights} \label{sec:neron_tate}

The purpose of this section is to review the notion of N\'eron--Tate heights of cycles on a polarized abelian variety over a number field. The main references for this section are \cites{clt, gu, zhsmall}.

\subsection{Adelic absolute values} \label{sec:abs_value}
 Let $k$ be a number field and let $S$ be the spectrum of the ring of integers $\oo_k$ of $k$. For each $v \in M(k)_\infty$, we choose the standard euclidean metrics $|\cdot|_v$ on all $\bar{k}_v\cong \cc$. For each $v \in M(k)_0$, we let $k_v$ denote the completion of $k$ at $v$, choose a uniformizer $\varpi_v$, fix the completion $\cc_v$ of an algebraic closure of $k_v$ and fix on $\cc_v$ a corresponding absolute value $|\cdot|_v$ such that $|\varpi_v|_v=(Nv)^{-1}$ with $Nv$ the cardinality of the residue field at $v$. These normalizations ensure that the resulting collection of absolute values $(|\cdot|_v)_{v \in M(k)}$ on $k$ satisfies the product formula.

\subsection{Adelic line bundles}
Let $V$ be a geometrically integral normal projective variety over~$k$ and let $L$ be a line bundle on $V$. We refer to \cites{cl_overview, zhsmall} for a general discussion of how suitable collections of admissible metrics on $L_v^\an$ on $V_v^\an$ for all $v \in M(k)$ (we call the resulting data an {\em admissible adelic line bundle} on $V$) give rise to a notion of {\em height} of integral cycles on $V$ with respect to $L$. We discuss the case of abelian varieties in some detail, referring to \cite{zhsmall} for proofs.

\subsection{Adelic intersections} \label{sec:adelic_int}
Let $A$ be an abelian variety over  $k$.  Let $L$ be a rigidified  symmetric ample line bundle on $A$. 
Given our choices of absolute values in Section~\ref{sec:abs_value}, we obtain, at each $v \in M(k)$, a canonical metric $\|\cdot\|_{L,v}$ on $L_v^\an$ on $\Aan$. We refer to Section~\ref{canonical_arch} for the complex embeddings and to Section~\ref{constr_canonical} for the non-archimedean places.

The resulting adelic line bundle $\hat{L} =(L,(\|\cdot\|_{L,v})_{v \in M(k)})$ is admissible. In particular, for each integral cycle $Z$ on $A$, the self-intersection number $\langle \hat{L} \cdots \hat{L} | Z \rangle$ is well defined.  The {\em N\'eron--Tate height} of $Z$ with respect to $L$ is defined to be the normalized intersection number
\begin{equation} \label{def_NT_height}
 \h'_L(Z) =   \frac{1}{[k:\qq]} \frac{1}{(d+1) \deg_L Z}  \langle \hat{L} \cdots \hat{L} | Z \rangle \, ,
\end{equation} 
where $d=\dim Z$. One can use formula \eqref{def_NT_height} to define the N\'eron--Tate height of any effective cycle $Z$ of pure dimension $d$ on $A$.  We have the following properties: $\h'_L(Z) \geq 0$ and, for all $n \in \zz_{>0}$, we have $\h'_L([n]_*Z)=n^2\,\h'_L(Z)$. In particular, if $Z$ is an abelian subvariety of $A$, then $\h'_L(Z)=0$. The real number $\h'_L(Z)$ is independent of the chosen rigidification of $L$. This is verified in Section~\ref{independence_rigid} below.

\subsection{Chambert-Loir measure}
Let $Z$ be an integral cycle on $A$ and let $v \in M(k)$. Associated to $\hat{L}$, $Z$ and $v$, one has a canonical measure $c_1(\hat{L})_v^d \wedge \delta_Z$ on $A^\an_v$  introduced by Chambert-Loir \cites{cl_overview, cl}. For $v \in M(k)_\infty$ the measure $c_1(\hat{L})_v^d \wedge \delta_Z$ is just obtained, as the notation suggests, by taking the $d$-fold wedge of the first Chern form of $(L_v,\|\cdot\|_{L,v})$ and wedging the result with the Dirac current $\delta_{Z_v}$ at $Z_v$. For $v \in M(k)_0$ the measure $c_1(\hat{L})_v^d \wedge \delta_Z$ is defined in terms of intersection theory. The measure $c_1(\hat{L})_v^d \wedge \delta_Z$ is independent of the choice of rigidification on $L$. We refer to $c_1(\hat{L})_v^d \wedge \delta_Z$ as the \emph{Chambert-Loir measure} associated to $\hat{L}$, $Z$ and~$v$.

As follows from  \cite[Th\'eor\`eme~4.1]{clt}, the Chambert-Loir measure satisfies the following property. Let $s$ be a non-zero rational section of $L$ such that $Z$ is not contained in the support of $\divisor_L s$. Then, for each $v \in M(k)$, the Green's function $\log \|s\|_{L,v}$ is integrable against the measure $c_1(\hat{L})_v^d \wedge \delta_Z$. Moreover,  one has the recursive formula
\begin{equation} \label{recursive}  \langle \hat{L} \cdots \hat{L}\, | \, Z \rangle = \langle \hat{L} \cdots \hat{L} \, | \, \divisor_Z(s) \rangle - \sum_{v \in M(k)} 
\int_{A_v^\an} \log \|s\|_{L,v} \,  c_1(\hat{L})_v^d \wedge \delta_Z \, . 
\end{equation}

\subsection{Independence of rigidification} \label{independence_rigid}
At this point we may verify that $\h'_L(Z)$ or equivalently $ \langle \hat{L} \cdots \hat{L}\, | \, Z \rangle$  is independent of the choice of rigidification of~$L$. We note that if we change the given rigidification of $L$ by multiplying it by a scalar $\lambda \in k^\times$ we have that at each $v \in M(k)$ the new canonical metric is obtained by multiplying the given canonical metric $\|\cdot\|_{L,v}$ by $|\lambda|_v$. It follows by the product formula that the term
\begin{equation}
 \sum_{v \in M(k)}  \int_{A_v^\an} \log \|s\|_{L,v} \,  c_1(\hat{L})_v^d \wedge \delta_Z
 \end{equation}
from (\ref{recursive}) is independent of the choice of rigidification of~$L$. By (\ref{recursive})  we are then reduced, using induction on $\dim(Z)$, to the case that $Z=P \in A(k)$ is a rational point of $A$.   Let $s$ be a non-zero rational section of $L$ such that $P$ does not lie on the support of $\divisor_L s$. In this case equation (\ref{recursive}) becomes
\begin{equation} \label{formula_NT_ht_point}  [k:\qq] \,  \h'_L(P) = -\sum_{v \in M(k)} \log \|s(P)\|_{L,v} \, , 
\end{equation}
expressing the N\'eron--Tate height with respect to $L$ of the point $P$ as a sum of evaluations of the local N\'eron functions $-\log\|s\|_{L,v}$. The product formula then shows that $\h'_L(P)$ is independent of the chosen rigidification of $L$. 

\subsection{Connection with the N\'eron model}
Assume that $A$ has semistable reduction over $k$ and let $\nn$ be the N\'eron model of $A$ over $S$. Let $n \in \zz_{>0}$ be such that for each $v \in M(k)_0$, the group of connected components of $\nn$ at $v$ is annihilated by $n$. Let $M=L^{\otimes 2n}$. By Lemma~\ref{MB-first}, the cubical line bundle $M$ admits a cubical extension $\mm$ over $\nn$. We obtain a hermitian line bundle $\overline{\mm}$ on $\nn$ by endowing $M$ at  each complex embedding with the canonical metric (cf.\ Section~\ref{canonical_arch}). Let $P \in A(k)$ and denote by $\overline{P}$ the section of $\nn$ over $S$ induced  by~$P$. Proposition~\ref{cubical_canonical_zero} readily gives that
\begin{equation}
   -2n \sum_{v \in M(k)} \log \|s\|_{L,v}(P) = \widehat{\deg} \,\, \overline{P}^* \overline{\mm} \, . 
\end{equation}
Combining with (\ref{formula_NT_ht_point}), we obtain
\begin{equation}
2n\, [k:\qq] \,  \h'_L(P) =  \widehat{\deg} \,\, \overline{P}^* \overline{\mm} \, . 
\end{equation}
Compare with \cite[Theorem~4.10(ii)]{bost_duke} and \cite[Section~III.4.4]{mb}.

\subsection{N\'eron--Tate height of  a theta divisor} \label{ht_of_theta}

Let $v \in M(k)_0$. In \cite{gu}, Gubler calculated the Chambert-Loir measures $c_1(\hat{L})_v^d \wedge \delta_Z$ on $A^\an_v$ explicitly, using tropical geometry. We need the following special case, where we take $Z$ to be $A$ itself. Let $\iota_v \colon  \varSigma_v \hookrightarrow \Aan$ denote the inclusion of the canonical skeleton into $\Aan$; cf.\ Section~\ref{sec:unif}. Let  $g=\dim(A)$ and let $\mu_{H,v}$ be  the  Haar measure of $\varSigma_v$, normalized to give $\varSigma_v$ total mass equal to one. Then, by \cite[Corollary~7.3]{gu}, we have the identity
\begin{equation} \label{gubler_CL}  
 c_{1}(\hat{L})^g_v = \deg_L (A) \, \iota_{v,*} (\mu_{H,v} )
\end{equation}
of measures on $A_v^\an$. 
For $v \in M(k)_\infty$, a very similar identity holds, namely 
\begin{equation} \label{arch_Gub}
c_{1}(\hat{L})^g_v = \deg_L (A) \,  \mu_{H,v} \, , 
\end{equation}
where now $\mu_{H,v}$ is the Haar measure on the complex torus $A_v^\an$, normalized to give $A_v^\an$ total mass equal to one. 
These identities lead to the following expression for the N\'eron--Tate height of a theta divisor.
\begin{thm} \label{NT_ht_Theta} Let $s$ be a non-zero global section of $L$. Write $\varTheta = \divisor s$. Then the identity
\[ g \,[k:\qq]\, \h_L'(\varTheta) = \sum_{v \in M(k)_0} \int_{A_v^\an} \log \|s\|_{L,v} \, \d \, \mu_{H,v} +  \sum_{v \in M(k)_\infty} \int_{A_v^\an} \log \|s\|_{L,v} \, \d \, \mu_{H,v}     \]
holds.
\end{thm}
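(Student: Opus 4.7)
The plan is to apply the recursive height formula \eqref{recursive} to the cycle $Z=A$ with the section $s$, and then to evaluate the Chambert-Loir measures via the formulas \eqref{gubler_CL} and \eqref{arch_Gub} already recalled in the text.

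First, I would observe that $\h'_L(A)=0$, since $A$ is an abelian subvariety of itself and the N\'eron-Tate height of an abelian subvariety vanishes (as noted in Section~\ref{sec:adelic_int}). By the definition \eqref{def_NT_height}, this gives $\langle \hat{L}\cdots\hat{L}\mid A\rangle=0$ for the $(g+1)$-fold self-intersection. Next, apply \eqref{recursive} with $Z=A$ (so $d=g$) and with the chosen global section $s$, whose divisor is the theta divisor $\varTheta=\divisor s$. Since $\varTheta$ is effective of pure dimension $g-1$ and does not contain $A$, the formula applies and yields
\[
0 = \langle \hat{L}\cdots\hat{L}\mid\varTheta\rangle - \sum_{v\in M(k)} \int_{A_v^{\an}} \log\|s\|_{L,v}\; c_1(\hat{L})_v^{g}\wedge\delta_A,
\]
so that
\[
\langle \hat{L}\cdots\hat{L}\mid\varTheta\rangle = \sum_{v\in M(k)} \int_{A_v^{\an}} \log\|s\|_{L,v}\; c_1(\hat{L})_v^{g}.
\]

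Now I substitute Gubler's description of the Chambert-Loir measures on the full top-dimensional cycle $A$. For $v\in M(k)_0$, formula \eqref{gubler_CL} gives $c_1(\hat{L})_v^{g}=\deg_L(A)\,\iota_{v,*}\mu_{H,v}$, so the corresponding integral reduces to $\deg_L(A)\int_{\varSigma_v}\log\|s\|_{L,v}\,d\mu_{H,v}$ by the change of variables along $\iota_v\colon\varSigma_v\hookrightarrow A_v^{\an}$. For $v\in M(k)_\infty$, formula \eqref{arch_Gub} gives $c_1(\hat{L})_v^{g}=\deg_L(A)\,\mu_{H,v}$ directly. Dividing both sides by $\deg_L(A)$ turns the right-hand side into exactly the sum that appears in the statement.

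To finish, compare the left-hand side with the definition \eqref{def_NT_height} of the N\'eron-Tate height of $\varTheta$: with $d=\dim\varTheta=g-1$ one has
\[
[k:\qq]\, g\,\deg_L(\varTheta)\,\h'_L(\varTheta)=\langle\hat{L}\cdots\hat{L}\mid\varTheta\rangle.
\]
Since $L$ defines a principal polarization and $\varTheta$ is a divisor in the class of $L$, we have $\deg_L(\varTheta)=L^{g-1}\cdot\varTheta=L^{g}=\deg_L(A)$. Thus the factors $\deg_L(A)$ and $\deg_L(\varTheta)$ cancel, and rearranging gives the claimed identity. No step should be a real obstacle here: the only thing to double-check is the compatibility of the integration-against-$\iota_{v,*}\mu_{H,v}$ with the fact that the canonical metric is bounded continuous (so that $\log\|s\|_{L,v}$ is integrable against $c_1(\hat L)_v^g\wedge \delta_A$), which is already built into the cited result from \cite{clt} invoked after \eqref{recursive}.
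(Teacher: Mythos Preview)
Your argument is correct and follows essentially the same route as the paper's own proof: vanishing of $\h'_L(A)$, the recursive formula \eqref{recursive} applied to $Z=A$, and Gubler's identities \eqref{gubler_CL}, \eqref{arch_Gub} for the Chambert-Loir measures. You make explicit the identification $\deg_L(\varTheta)=L^{g-1}\cdot\varTheta=L^g=\deg_L(A)$ that the paper uses tacitly (and note that this holds simply because $\varTheta\in|L|$, independently of principality of the polarization).
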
 
\begin{proof} As the N\'eron--Tate height of $A$ vanishes we have $\langle \hat{L} \cdots \hat{L}\, | \, A \rangle = 0$.  By \eqref{recursive} applied to $Z=A$ and the global section $s$, we find that
\[ \langle \hat{L} \cdots \hat{L} \, | \, \varTheta \rangle =  \sum_{v \in M(k)} \int_{A_v^\an} \log \|s\|_{L,v} \, c_{1}(\hat{L})^g_v \, . \]
We also have $\langle \hat{L} \cdots \hat{L} \, | \, \varTheta \rangle = [k:\qq] \, \h_L'(\varTheta)\cdot g \cdot  \deg_L (A)$. Using \eqref{gubler_CL} and \eqref{arch_Gub}, we find the required identity.
\end{proof}

\section{Proof of Theorem~\ref{main_intro}} \label{sec:proof_main}

In this section we prove Theorem \ref{main_intro}. We repeat the statement for convenience. Let $k$ be a number field. Let $(A,\lambda)$ be a principally polarized abelian variety of positive dimension with semistable reduction over  $k$. Let $L$ be a symmetric ample line bundle on $A$ that determines the principal polarization $\lambda$, 
let $s$ be a non-zero global section of $L$ and write $\varTheta = \divisor s$. 

Let $\h'_L(\varTheta)$ be the N\'eron--Tate height \eqref{def_NT_height} of $\varTheta$ and let $\h_F(A)$ be the stable Faltings height \eqref{faltings_height} of $A$.
 For $v \in M(k)_\infty$, let $ I(A_v,\lambda_v)$ be the invariant defined in \eqref{def_arch_lambda} and, for $v \in M(k)_0$, let $ I(A_v,\lambda_v) $ be the invariant defined in \eqref{def_nonarch_lambda}. 
 
 Write $\kappa_0=\log(\pi \sqrt{2})$ and set $g=\dim(A)$. 
\begin{thm}  \label{main} The equality
\[ \h_F(A) =  2g \,\h'_L(\varTheta) -  \kappa_0 \, g + \frac{2}{[k:\qq]}\left( \sum_{v \in M(k)_0} I(A_v,\lambda_v)  \log Nv +  \sum_{v \in M(k)_\infty} I(A_v,\lambda_v)   \right) \]
holds in $\rr$. 
\end{thm}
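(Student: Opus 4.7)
The plan is to combine three ingredients already established in the excerpt. Bost's key formula (Theorem~\ref{cle}) reduces the stable Faltings height to the Arakelov degree of $\pi_*\ll$ for a Moret-Bailly model. Proposition~\ref{direct_image_ord} identifies the non-archimedean order of a global section $s$ of $L$ with $-\int_{\varSigma_v}\log\|s\|_{L,v}\,\d\mu_{H,v} - \tfrac{1}{2}I(A_v,\lambda_v)$. And Theorem~\ref{NT_ht_Theta} expresses $g\,[k:\qq]\,\h'_L(\varTheta)$ as exactly the sum over all places of the integrals of $\log\|s\|_{L,v}$, so these integrals get absorbed. The archimedean contribution to the Arakelov degree is handled directly by the defining identity~\eqref{I-inv} of $I(A_v,\lambda_v)$.

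\smallskip

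First, using the stability of $\h_F(A)$, $\h'_L(\varTheta)$, and of the normalized sum of local invariants $I(A_v,\lambda_v)$ under finite extensions of~$k$, I would replace $k$ by a suitable finite extension so that the following hold simultaneously: $L$ has integral characteristics at every $v \in M(k)_0$; the ramification indices at the finite places satisfy the divisibility hypothesis of Lemma~\ref{MB-second}, yielding a cubical extension $\ll$ of $L$ over the N\'eron model $\pi\colon \nn \to S = \Spec\oo_k$; and $(\nn,\ll)$ is a Moret-Bailly model of $(A,L)$, so that $\pi_*\ll$ is an invertible sheaf on $S$. Since $\dim H^0(A,L) = 1$, pick a non-zero global section $s$ of~$L$, unique up to scalar, and view $s$ simultaneously as a non-zero rational section of $\pi_*\ll$.

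\smallskip

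Now I would assemble the formula. By Theorem~\ref{cle} and the definition~\eqref{def_Ar_deg} of the Arakelov degree,
\[ [k:\qq]\,\h_F(A) = -2\,\widehat{\deg}\,\overline{\pi_*\ll} - [k:\qq]\,\kappa_0\,g, \]
\[ \widehat{\deg}\,\overline{\pi_*\ll} = \sum_{v \in M(k)_0}\ord_v(s)\log Nv - \sum_{v \in M(k)_\infty}\log \|s\|_{\ell^2,v}. \]
At each $v \in M(k)_\infty$ the defining identity~\eqref{I-inv} for $I(A_v,\lambda_v)$ rewrites
\[ -\log\|s\|_{\ell^2,v} = -I(A_v,\lambda_v) - \int_{A_v^\an}\log\|s\|_{L,v}\,\d\mu_{H,v}, \]
while at each $v \in M(k)_0$, Proposition~\ref{direct_image_ord}, applied with trivial base change (since by our arrangement the cubical extension of $L$ already exists over $\oo_{k,v}$), gives
\[ \ord_v(s) = -\int_{\varSigma_v}\log\|s\|_{L,v}\,\d\mu_{H,v} - \tfrac{1}{2}I(A_v,\lambda_v). \]
Multiplying by $-2$, collecting the terms involving $I(A_v,\lambda_v)$, and recognizing via Theorem~\ref{NT_ht_Theta} that
\[ 2\sum_{v \in M(k)_0}\int_{\varSigma_v}\log\|s\|_{L,v}\,\d\mu_{H,v}\,\log Nv + 2\sum_{v \in M(k)_\infty}\int_{A_v^\an}\log\|s\|_{L,v}\,\d\mu_{H,v} = 2g\,[k:\qq]\,\h'_L(\varTheta), \]
one arrives, after dividing through by $[k:\qq]$, at the stated identity.

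\smallskip

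The main obstacle I anticipate is the coordination between the global Moret-Bailly hypothesis of Theorem~\ref{cle} and the local applicability of Proposition~\ref{direct_image_ord}: one must choose the finite extension of $k$ so that both hold at the same time, in particular so that $A[2]=K(L^{\otimes 2})$ extends as a finite flat subgroup scheme of $\nn$ alongside the cubical extension $\ll$. Should a simultaneous extension not be directly available, a fallback is to invoke Proposition~\ref{direct_image_ord} locally over a further ramified extension at each place, verifying that the factor $e_{w/v}$ appearing in its conclusion is exactly compensated by the corresponding rescaling of $\log Nw$ and $[k:\qq]$ in the Arakelov degree formula.
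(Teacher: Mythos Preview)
Your overall strategy and the three ingredients you assemble are exactly those of the paper's proof, and the final computation is correct in outline. There is, however, a genuine gap in your primary setup: one cannot, by passing to a finite extension of $k$ and then relabeling, arrange for $L$ itself to extend cubically over the N\'eron model $\nn$ of the \emph{new} field. The obstruction to cubically extending $L$ over $\nn$ is governed by the component groups $\varPhi_v$, and under a ramified base change these groups \emph{grow} rather than shrink (for a Tate curve with $\ord_v(q)=\ell$ the group is $\zz/\ell\zz$, becoming $\zz/e\ell\zz$ after an extension of ramification index $e$). Lemma~\ref{MB-second} produces a cubical extension over the \emph{base change} $\nn\times_S S'$ of the old N\'eron model, not over the N\'eron model of $A_{k'}$; the two differ, and Proposition~\ref{direct_image_ord} is stated precisely for the former.

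Your anticipated fallback is therefore not optional but required, and it is exactly the paper's route: first enlarge $k$ so that $A$ has split semistable reduction and all of $A[2]=K(L^{\otimes 2})$ is $k$-rational (which guarantees the finite-flat extension condition for a Moret-Bailly model via the N\'eron mapping property); then choose a further Galois extension $k\to k'$ whose ramification index $e_v$ at each bad place satisfies the divisibility of Lemma~\ref{MB-second}; apply that lemma to obtain the Moret-Bailly model $(\nn',\ll^\sharp)$ with $\nn'=\nn\times_S S'$ and invoke Theorem~\ref{cle} over $k'$; finally apply Proposition~\ref{direct_image_ord} at each place $w$ of $k'$, and use $\sum_{w\mid v}e_v\log Nw=[k':k]\log Nv$ to descend back to $k$. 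One further slip to watch: with the adelic normalization $|\varpi_v|_v=(Nv)^{-1}$ underlying Theorem~\ref{NT_ht_Theta}, the local identity from Proposition~\ref{direct_image_ord} reads $\ord_v(s)\log Nv=-\int_{\varSigma_v}\log\|s\|_{L,v}\,\d\mu_{H,v}-\tfrac12 I(A_v,\lambda_v)\log Nv$, so your displayed application of Theorem~\ref{NT_ht_Theta} should not carry the extra $\log Nv$ on the non-archimedean integrals.
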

\begin{proof}
We are allowed to replace $k$ by a finite field extension. Hence, we may assume that $(A,L)$ has a Moret-Bailly model $(\aa,\ll)$ over the ring of integers of $k$.

For each $v \in M(k)_0$, we let $k_v$ denote the completion of $k$ at $v$, choose a uniformizer $\varpi_v$, fix the completion $\cc_v$ of an algebraic closure of $k_v$ and fix on $\cc_v$ a corresponding absolute value $|\cdot|_v$ such that $|\varpi_v|_v=(Nv)^{-1}$ with $Nv$ the cardinality of the residue field at $v$. 

For each $v \in M(k)_\infty$, we choose the standard euclidean metrics $|\cdot|_v$ on all $\bar{k}_v\cong \cc$.

We fix a rigidification on $L$. For each $v \in M(k)$, we let $\|\cdot\|_{L,v}$ denote the canonical metric on $L^\an_v$ on $\Aan$ determined by the absolute value $|\cdot|_v$.

Let $S$ be the spectrum of the ring of integers of $k$ and
let $\pi \colon \aa \to S$ denote the structure morphism. 
Set $\h^*_F(A) = \h_F(A)+ \kappa_0 g $. By Theorem \ref{cle}  (the key formula), we find that
\begin{equation} \label{cle_application} [k:\qq] \, \h^*_F(A) = -2 \, \widehat{\deg}_{S} \, \overline{\pi_* \ll  }  \, .
\end{equation}
Here the Arakelov degree is taken over $S$ and the metrics at the complex embeddings are the $L^2$-metrics (\ref{define_L2}). 

We are going to calculate the Arakelov degree in the right-hand side of (\ref{cle_application}) explicitly. View $s$ as a rational section of the invertible sheaf $\pi_* \ll$ on $S$. By \eqref{def_Ar_deg}, we have
\begin{equation} \label{Ar_degree}
\widehat{\deg}_{S} \, \overline{\pi_* \ll  } = \sum_{v \in M(k)_0} \ord_v(s) \log Nv - \sum_{v \in M(k)_\infty} \log \|s\|_v \, . 
\end{equation}
Let $v \in M(k)_0$.
Let $\varSigma_v$ be the canonical skeleton of $A_v^\an$ and let $\mu_{H,v}$   denote the Haar measure of unit volume on $\varSigma_v$.
By Corollary~\ref{crucial_step}, we have, taking care of the identity $|\varpi_v|_v=(Nv)^{-1}$ for the absolute value at $v$,
\begin{equation} 
\begin{split}
\label{A0} \ord_v(s) \log Nv & = - \log \left( \sup_{x \in \varSigma_v} \|s(x)\|_{L,v}  \right) \\
& = - \left( \int_{A_v^\an} \log \|s\|_{L,v} \, \d \, \mu_{H,v} +   I(A_v,\lambda_v) \log Nv \right)  \, .
\end{split}
\end{equation}

Now let $v \in M(k)_\infty$. From \eqref{def_arch_lambda} and \eqref{define_L2}, we obtain
\begin{equation} \label{C} \begin{split} \log \|s\|_v & =  \frac{1}{2} \log \int_{\Aan} \|s\|_{L,v}^2 \, \d  \, \mu_{H,v} \\
& = \int_{\Aan} \log \|s\|_{L,v} \, \d \, \mu_{H,v} + I(A_v,\lambda_v)  \, .   \end{split} 
\end{equation}

Combining \eqref{cle_application} -- \eqref{C}, we find that
\[ \begin{split} [k:\qq]\, \h^*_F(A)  &  = 2 \sum_{v \in M(k)_0}  \left(  \int_{A_v^\an} \log \|s\|_{L,v} \, \d \, \mu_{H,v} + I(A_v,\lambda_v)\log Nv   \right) \\
  & \qquad + 2 \sum_{v \in M(k)_\infty}  \left(   \int_{\Aan} \log \|s\|_{L,v} \, \d \, \mu_{H,v} + I(A_v,\lambda_v) \right) 
  \, . 
\end{split} \]
Using Theorem \ref{NT_ht_Theta} we obtain
\[  [k:\qq] \, \h^*_F(A)  = 2g \,[k:\qq] \,\h'_L(\varTheta) + 2 \sum_{v \in M(k)_0} I(A_v,\lambda_v)  \log Nv  + 2 \sum_{v \in M(k)_\infty}  \,I(A_v,\lambda_v) 
  \, . 
\] 
This proves Theorem \ref{main_intro}. 
\end{proof}

\begin{bibdiv}
\begin{biblist}

\bib{aut}{article}{
      author={Autissier, Pascal},
       title={Hauteur de {F}altings et hauteur de {N}\'eron-{T}ate du diviseur
  th\^eta},
        date={2006},
        ISSN={0010-437X},
     journal={Compos. Math.},
      volume={142},
      number={6},
       pages={1451\ndash 1458},
  url={https://doi-org.proxy.library.cornell.edu/10.1112/S0010437X0600234X},
      review={\MR{2278754}},
}

\bib{aut2}{article}{
      author={Autissier, Pascal},
       title={Un lemme matriciel effectif},
        date={2013},
        ISSN={0025-5874},
     journal={Math. Z.},
      volume={273},
      number={1-2},
       pages={355\ndash 361},
      review={\MR{3010164}},
}

\bib{be}{book}{
      author={Berkovich, Vladimir~G.},
       title={Spectral theory and analytic geometry over non-{A}rchimedean
  fields},
      series={Mathematical Surveys and Monographs},
   publisher={American Mathematical Society, Providence, RI},
        date={1990},
      volume={33},
        ISBN={0-8218-1534-2},
      review={\MR{1070709}},
}

\bib{bffourier}{incollection}{
      author={Baker, Matthew},
      author={Faber, Xander},
       title={Metrized graphs, {L}aplacian operators, and electrical networks},
        date={2006},
   booktitle={Quantum graphs and their applications},
      series={Contemp. Math.},
      volume={415},
   publisher={Amer. Math. Soc., Providence, RI},
       pages={15\ndash 33},
  url={https://doi-org.proxy.library.cornell.edu/10.1090/conm/415/07857},
      review={\MR{2277605}},
}

\bib{bgs}{article}{
      author={Bost, J.-B.},
      author={Gillet, H.},
      author={Soul\'e, C.},
       title={Heights of projective varieties and positive {G}reen forms},
        date={1994},
        ISSN={0894-0347},
     journal={J. Amer. Math. Soc.},
      volume={7},
      number={4},
       pages={903\ndash 1027},
         url={https://doi-org.proxy.library.cornell.edu/10.2307/2152736},
      review={\MR{1260106}},
}

\bib{bl}{article}{
      author={Bosch, Siegfried},
      author={L\"utkebohmert, Werner},
       title={Degenerating abelian varieties},
        date={1991},
        ISSN={0040-9383},
     journal={Topology},
      volume={30},
      number={4},
       pages={653\ndash 698},
  url={https://doi-org.proxy.library.cornell.edu/10.1016/0040-9383(91)90045-6},
      review={\MR{1133878}},
}

\bib{bo-mpim}{inproceedings}{
      author={Bost, J.-B.},
       title={Arakelov geometry of abelian varieties},
        date={1996},
   booktitle={{P}roceedings of a conference on arithmetic geometry, technical
  report, {M}ax-{P}lanck {I}nstitut f\"ur {M}athematik, {B}onn},
      volume={96-51},
}

\bib{bost_duke}{article}{
      author={Bost, J.-B.},
       title={Intrinsic heights of stable varieties and abelian varieties},
        date={1996},
        ISSN={0012-7094},
     journal={Duke Math. J.},
      volume={82},
      number={1},
       pages={21\ndash 70},
  url={https://doi-org.proxy.library.cornell.edu/10.1215/S0012-7094-96-08202-2},
      review={\MR{1387221}},
}

\bib{bru}{article}{
      author={Baker, Matt},
      author={Rumely, Robert},
       title={Harmonic analysis on metrized graphs},
        date={2007},
        ISSN={0008-414X},
     journal={Canad. J. Math.},
      volume={59},
      number={2},
       pages={225\ndash 275},
  url={https://doi-org.proxy.library.cornell.edu/10.4153/CJM-2007-010-2},
      review={\MR{2310616}},
}

\bib{br}{article}{
      author={Baker, Matthew},
      author={Rabinoff, Joseph},
       title={The skeleton of the {J}acobian, the {J}acobian of the skeleton,
  and lifting meromorphic functions from tropical to algebraic curves},
        date={2015},
        ISSN={1073-7928},
     journal={Int. Math. Res. Not. IMRN},
      number={16},
       pages={7436\ndash 7472},
         url={https://doi-org.proxy.library.cornell.edu/10.1093/imrn/rnu168},
      review={\MR{3428970}},
}

\bib{cl}{article}{
      author={Chambert-Loir, Antoine},
       title={Mesures et \'equidistribution sur les espaces de {B}erkovich},
        date={2006},
        ISSN={0075-4102},
     journal={J. Reine Angew. Math.},
      volume={595},
       pages={215\ndash 235},
  url={https://doi-org.proxy.library.cornell.edu/10.1515/CRELLE.2006.049},
      review={\MR{2244803}},
}

\bib{cl_overview}{incollection}{
      author={Chambert-Loir, Antoine},
       title={Heights and measures on analytic spaces. {A} survey of recent
  results, and some remarks},
        date={2011},
   booktitle={Motivic integration and its interactions with model theory and
  non-{A}rchimedean geometry. {V}olume {II}},
      series={London Math. Soc. Lecture Note Ser.},
      volume={384},
   publisher={Cambridge Univ. Press, Cambridge},
       pages={1\ndash 50},
      review={\MR{2885340}},
}

\bib{clt}{article}{
      author={Chambert-Loir, Antoine},
      author={Thuillier, Amaury},
       title={Mesures de {M}ahler et \'equidistribution logarithmique},
        date={2009},
        ISSN={0373-0956},
     journal={Ann. Inst. Fourier (Grenoble)},
      volume={59},
      number={3},
       pages={977\ndash 1014},
  url={http://aif.cedram.org.proxy.library.cornell.edu/item?id=AIF_2009__59_3_977_0},
      review={\MR{2543659}},
}

\bib{Deligne}{incollection}{
      author={Deligne, Pierre},
       title={Preuve des conjectures de {T}ate et de {S}hafarevitch (d'apr\`es
  {G}. {F}altings)},
        date={1985},
       pages={25\ndash 41},
        note={Seminar Bourbaki, Vol. 1983/84},
      review={\MR{768952}},
}

\bib{famordell}{article}{
      author={Faltings, G.},
       title={Endlichkeitss\"atze f\"ur abelsche {V}ariet\"aten \"uber
  {Z}ahlk\"orpern},
        date={1983},
        ISSN={0020-9910},
     journal={Invent. Math.},
      volume={73},
      number={3},
       pages={349\ndash 366},
         url={https://doi-org.proxy.library.cornell.edu/10.1007/BF01388432},
      review={\MR{718935}},
}

\bib{fa}{article}{
      author={Faltings, Gerd},
       title={Calculus on arithmetic surfaces},
        date={1984},
        ISSN={0003-486X},
     journal={Ann. of Math. (2)},
      volume={119},
      number={2},
       pages={387\ndash 424},
         url={https://doi-org.proxy.library.cornell.edu/10.2307/2007043},
      review={\MR{740897}},
}

\bib{fc}{book}{
      author={Faltings, Gerd},
      author={Chai, Ching-Li},
       title={Degeneration of abelian varieties},
      series={Ergebnisse der Mathematik und ihrer Grenzgebiete (3)},
   publisher={Springer-Verlag, Berlin},
        date={1990},
      volume={22},
        ISBN={3-540-52015-5},
  url={https://doi-org.proxy.library.cornell.edu/10.1007/978-3-662-02632-8},
      review={\MR{1083353}},
}

\bib{frss}{article}{
      author={Foster, Tyler},
      author={Rabinoff, Joseph},
      author={Shokrieh, Farbod},
      author={Soto, Alejandro},
       title={Non-{A}rchimedean and tropical theta functions},
        date={2018},
        ISSN={0025-5831},
     journal={Math. Ann.},
      volume={372},
      number={3-4},
       pages={891\ndash 914},
         url={https://doi.org/10.1007/s00208-018-1646-3},
      review={\MR{3880286}},
}

\bib{gr}{article}{
      author={Gaudron, \'Eric},
      author={R\'emond, Ga\"el},
       title={Th\'eor\`eme des p\'eriodes et degr\'es minimaux d'isog\'enies},
        date={2014},
        ISSN={0010-2571},
     journal={Comment. Math. Helv.},
      volume={89},
      number={2},
       pages={343\ndash 403},
         url={https://doi-org.proxy.library.cornell.edu/10.4171/CMH/322},
      review={\MR{3225452}},
}

\bib{gu_trop}{article}{
      author={Gubler, Walter},
       title={Tropical varieties for non-{A}rchimedean analytic spaces},
        date={2007},
        ISSN={0020-9910},
     journal={Invent. Math.},
      volume={169},
      number={2},
       pages={321\ndash 376},
  url={https://doi-org.proxy.library.cornell.edu/10.1007/s00222-007-0048-z},
      review={\MR{2318559}},
}

\bib{gu}{article}{
      author={Gubler, Walter},
       title={Non-{A}rchimedean canonical measures on abelian varieties},
        date={2010},
        ISSN={0010-437X},
     journal={Compos. Math.},
      volume={146},
      number={3},
       pages={683\ndash 730},
  url={https://doi-org.proxy.library.cornell.edu/10.1112/S0010437X09004679},
      review={\MR{2644932}},
}

\bib{gu-hohen}{article}{
      author={Gubler, Walter},
       title={H\"ohentheorie},
        date={1994},
        ISSN={0025-5831},
     journal={Math. Ann.},
      volume={298},
      number={3},
       pages={427\ndash 455},
         url={https://doi-org.proxy.library.cornell.edu/10.1007/BF01459743},
        note={With an appendix by J\"urg Kramer},
      review={\MR{1262769}},
}

\bib{hi}{unpublished}{
      author={Hindry, Marc},
       title={Sur les hauteurs locales de {N}\'eron sur les vari\'et\'es
  ab\'eliennes},
        date={1993},
        note={Unpublished manuscript, available at
  \url{https://webusers.imj-prg.fr/~marc.hindry/Neron.pdf}},
}

\bib{djneron}{article}{
      author={Jong, Robin~de},
       title={N\'eron-{T}ate heights of cycles on {J}acobians},
        date={2018},
        ISSN={1056-3911},
     journal={J. Algebraic Geom.},
      volume={27},
      number={2},
       pages={339\ndash 381},
      review={\MR{3764279}},
}

\bib{djs}{unpublished}{
      author={Jong, Robin~de},
      author={Shokrieh, Farbod},
       title={Tropical moments of tropical {J}acobians},
        date={2018},
        note={Preprint available at
  \href{http://arxiv.org/abs/1810.02639}{ar{X}iv:1810.02639}},
}

\bib{lang}{book}{
      author={Lang, Serge},
       title={Fundamentals of {D}iophantine geometry},
   publisher={Springer-Verlag, New York},
        date={1983},
        ISBN={0-387-90837-4},
         url={https://doi.org/10.1007/978-1-4757-1810-2},
      review={\MR{715605}},
}

\bib{Masser}{incollection}{
      author={Masser, D.~W.},
       title={Small values of heights on families of abelian varieties},
        date={1987},
   booktitle={Diophantine approximation and transcendence theory ({B}onn,
  1985)},
      series={Lecture Notes in Math.},
      volume={1290},
   publisher={Springer, Berlin},
       pages={109\ndash 148},
         url={https://doi-org.offcampus.lib.washington.edu/10.1007/BFb0078706},
      review={\MR{927559}},
}

\bib{mb}{article}{
      author={Moret-Bailly, Laurent},
       title={Pinceaux de vari\'et\'es ab\'eliennes},
        date={1985},
        ISSN={0303-1179},
     journal={Ast\'erisque},
      number={129},
       pages={266},
      review={\MR{797982}},
}

\bib{mz}{incollection}{
      author={Mikhalkin, Grigory},
      author={Zharkov, Ilia},
       title={Tropical curves, their {J}acobians and theta functions},
        date={2008},
   booktitle={Curves and abelian varieties},
      series={Contemp. Math.},
      volume={465},
   publisher={Amer. Math. Soc., Providence, RI},
       pages={203\ndash 230},
  url={https://doi-org.proxy.library.cornell.edu/10.1090/conm/465/09104},
      review={\MR{2457739}},
}

\bib{pa}{incollection}{
      author={Pazuki, Fabien},
       title={D\'{e}compositions en hauteurs locales},
        date={2019},
   booktitle={Arithmetic geometry: computation and applications},
      series={Contemp. Math.},
      volume={722},
   publisher={Amer. Math. Soc., Providence, RI},
       pages={121\ndash 140},
         url={https://doi.org/10.1090/conm/722/14529},
      review={\MR{3896852}},
}

\bib{ph}{article}{
      author={Philippon, Patrice},
       title={Sur des hauteurs alternatives. {I}},
        date={1991},
        ISSN={0025-5831},
     journal={Math. Ann.},
      volume={289},
      number={2},
       pages={255\ndash 283},
         url={https://doi-org.proxy.library.cornell.edu/10.1007/BF01446571},
      review={\MR{1092175}},
}

\bib{sil}{incollection}{
      author={Silverman, Joseph~H.},
       title={Heights and elliptic curves},
        date={1986},
   booktitle={Arithmetic geometry ({S}torrs, {C}onn., 1984)},
   publisher={Springer, New York},
       pages={253\ndash 265},
      review={\MR{861979}},
}

\bib{wa}{thesis}{
      author={Wagener, Benjamin},
       title={G\'eom\'etrie arithm\'etique sur les vari\'et\'es ab\'eliennes:
  minoration explicite de la hauteur de {F}altings et borne sur la torsion},
        type={Ph.D. Thesis},
        date={2016},
  url={http://theses.md.univ-paris-diderot.fr/WAGENER_Benjamin_1_va_20161122.pdf},
        note={Available at
  \url{http://theses.md.univ-paris-diderot.fr/WAGENER_Benjamin_1_va_20161122.pdf}},
}

\bib{we}{article}{
      author={Werner, Annette},
       title={Local heights on abelian varieties with split multiplicative
  reduction},
        date={1997},
        ISSN={0010-437X},
     journal={Compositio Math.},
      volume={107},
      number={3},
       pages={289\ndash 317},
  url={https://doi-org.proxy.library.cornell.edu/10.1023/A:1000139010486},
      review={\MR{1458753}},
}

\bib{zhadm}{article}{
      author={Zhang, Shouwu},
       title={Admissible pairing on a curve},
        date={1993},
        ISSN={0020-9910},
     journal={Invent. Math.},
      volume={112},
      number={1},
       pages={171\ndash 193},
         url={https://doi-org.proxy.library.cornell.edu/10.1007/BF01232429},
      review={\MR{1207481}},
}

\bib{zhsmall}{article}{
      author={Zhang, Shouwu},
       title={Small points and adelic metrics},
        date={1995},
        ISSN={1056-3911},
     journal={J. Algebraic Geom.},
      volume={4},
      number={2},
       pages={281\ndash 300},
      review={\MR{1311351}},
}

\end{biblist}
\end{bibdiv}

\vspace{7mm}
\end{document}